\definecolor{Chocolat}{rgb}{0.36, 0.2, 0.09}
\definecolor{BleuTresFonce}{rgb}{0.215, 0.215, 0.36}
\definecolor{EgyptianBlue}{rgb}{0.06, 0.2, 0.65}
\newtheorem{theorem}{Theorem}[section]
\newtheorem{corollary}[theorem]{Corollary}
\newtheorem{lemma}[theorem]{Lemma}
\newtheorem{proposition}[theorem]{Proposition}
\newtheorem{conjecture}[theorem]{Conjecture}
\theoremstyle{definition}
\newtheorem{remark}[theorem]{Remark}
\newcommand{\ac}{\scriptstyle \text{\rm !`}}
\DeclareMathAlphabet{\pazocal}{OMS}{zplm}{m}{n}
\def\calA{\pazocal{A}}
\def\calB{\pazocal{B}}
\def\calF{\pazocal{F}}
\def\calG{\pazocal{G}}
\def\calK{\pazocal{K}}
\def\calL{\pazocal{L}}
\def\calO{\pazocal{O}}
\def\calR{\pazocal{R}}
\def\calW{\pazocal{W}}
\DeclareMathOperator{\Lie}{Lie}
\DeclareMathOperator{\Ass}{Ass}
\DeclareMathOperator{\Alt}{Alt}
\DeclareMathOperator{\Ind}{Ind}
\DeclareMathOperator{\Vect}{{\ensuremath\mathsf{Vect}}}
\DeclareMathAlphabet{\mathbbold}{U}{bbold}{m}{n}
\def\k{\mathbbold{k}}
\begin{document}

\title[On the conjecture of Shang about free alternative algebras]{On the conjecture of Shang\\ about free alternative algebras}

\author{Vladimir Dotsenko}
\address{Institut de Recherche Math\'ematique Avanc\'ee, UMR 7501, Universit\'e de Strasbourg et CNRS, 7 rue Ren\'e-Descartes, 67000 Strasbourg, France}

\email{vdotsenko@unistra.fr}

\begin{abstract}
Kashuba and Mathieu \cite{MR4235202} proposed a conjecture on vanishing of some components of the homology of certain Lie algebras, implying a description of the $GL_d$-module structure of the free $d$-gene\-rated Jordan algebra. Their conjecture relies on a functorial version of the Tits--Kantor--Koecher construction that builds Lie algebras out of Jordan algebras.
Recently, Shang~\cite{shang2025allisonbenkartgaofunctorcyclicityfree} used a functorial construction of Allison, Benkart and Gao that builds Lie algebras out of alternative algebras to propose another conjecture on vanishing of some components of the homology of certain Lie algebras, implying a description of the $GL_d$-module structure of the free $d$-generated alternative algebra. In this note, we explain why the conjecture of Shang is not true. 
\end{abstract}

\maketitle

\section*{Introduction}
An alternative algebra is a nonassociative algebra in which the associator
 \[
(a,b,c)=(ab)c-a(bc)     
 \] 
is a skew-symmetric function of its arguments $a,b,c$. The most famous instance of octonions is given by octonions of Cayley \cite{Cayley01031845}; a systematic theory of alternative algebras was developed by Zorn \cite{Zorn1931}. 
  
A lot of structural results on alternative algebras are known \cite{MR668355}, yet free alternative algebras remain a mysterious object. They have many unexpected properties: for instance, free alternative algebras on sufficiently many generators contain central and nilpotent elements \cite{MR539586}. Recently, Shang \cite{shang2025allisonbenkartgaofunctorcyclicityfree}, inspired by a conjecture of Kashuba and Mathieu on free Jordan algebras \cite{MR4235202}, proposed a conjecture that would shed light on free alternative algebras. Namely, he conjectured that the homology of Lie algebras obtained from free alternative algebras by a construction of Allison, Benkart, and Gao \cite[Th.~4.13]{MR1752782}, if viewed as an $\mathfrak{sl}_3$-module, contains no trivial or adjoint isotypic components in homological degrees greater than one. If true, this conjecture would imply a description of the $GL_d$-module structure of the free $d$-generated alternative algebra for each $d\in\mathbb{N}$. 

We show in this note that the conjecture of Shang is not true. It is worth mentioning that, as indicated to the author by Olivier Mathieu when discussing the results of this note, one heuristic reason that the conjecture of Shang is ``too good to be true'' is that the Allison--Benkart--Gao Lie algebra has three different $\mathfrak{sl}_3$-module types of nontrivial relations, so it is plausible that interaction of those relations creates trivial and adjoint components in higher homological degrees. In fact, we give several different obstructions to the conjecture. Some of them come from the concrete situations where free alternative algebras are well understood: we show that conjecture fails in in degree $17$ for the free alternative algebra on two generators (which coincides with the free associative algebra on two generators thanks to a theorem of Artin \cite{Zorn1931}) and in degree $10$ for the free alternative superalgebra on one odd generator (described explicitly by Shestakov and Zhukavets \cite{MR2355693}), showing that the conjecture fails for all free alternative algebras and superalgebras except for the free alternative algebra on one generator. Moreover, we show that the conjecture predicts the correct dimensions up to degree $6$ for all free alternative algebras and superalgebras, but in degree $7$ it fails for the free alternative algebra on three generators (described explicitly by Iltyakov \cite{MR781229}). All these argument rely on Lemma \ref{lm:residue} that we prove below, which is an analogue of the result of \cite[Sec.~1.11]{MR4235202}. One other obstruction is of more fundamental nature: it turns out that the conjecture predicts a virtual non-effective $S_{10}$-module structure on the component $\Alt(10)$ of the alternative operad. 

Our discovery that the conjecture already fails for the free alternative algebra on two generators is particularly surprising; in fact, it led us to questioning the conjecture of Kashuba and Mathieu in the case of two generators, and to eventually disproving it as well, despite overwhelmingly positive computational evidence in its favour \cite{DH-KM}. (That paper, even though partially written in parallel to the present one, additionally contains new very extensive computational data on free Jordan algebras; the present paper is much more theoretical, even though relies on computer software for verifying a couple of assertions.)

This paper is organized as follows. In Section \ref{sec:conjecture}, we recall the necessary definitions and the main conjecture. In Section \ref{sec:operads}, we discuss an operadic viewpoint of the Allison--Benkart--Gao construction and of the conjecture of Shang and its superalgebra version. In Section \ref{sec:counterexamples}, we explain why the conjecture is false for most free alternative algebras and superalgebras, and indicate the lowest degree in which it breaks. In Section \ref{sec:inner}, we discuss one potentially surviving conjecture on inner derivations of free alternative algebras. Finally, in Appendix~\ref{sec:appendix}, we give the \texttt{SageMath} \cite{sagemath} code that computes the symmetric group actions on the components of the operad of alternative algebras predicted by the conjecture.

\subsection*{Acknowledgements} I am grateful to Frederic Chapoton, Iryna Kashuba, Olivier Mathieu, and Ivan Shestakov for many useful discussions. This work was supported by the French national research agency (project ANR-20-CE40-0016) and by Institut Universitaire de France. Some of the computations presented here were done during the visit to the Shenzhen International Center for Mathematics; I am grateful to this institution for hospitality and excellent working conditions. 

\section{The Allison--Benkart--Gao functor and the conjecture of Shang}\label{sec:conjecture}

Throughout this paper, we use the following notation in any alternative algebra $A$: $f\cdot g=\frac12(fg+gf)$, $L_g(f)=g\cdot f$, $R_g(f)=f\cdot g$, and denote by $\k$ the ground field, which we assume to be of zero characteristic. Recall that for any alternative algebra $A$ and any $a,b\in A$, the operator $D_{a,b}\in \mathrm{End}(A)$ defined by the formula
 \[
D_{a,b}:=[L_a,L_b]+[R_a,R_b]+[L_a,R_b] 
 \]
is a derivation of $A$. These operators satisfy the following identities \cite[Sec.~III.8]{MR210757}:
\begin{gather}
D_{a,b}+D_{b,a}=0,\label{eq:antisym}\\
D_{ab,c}+D_{bc,a}+D_{ca,b}=0,\label{eq:cyclic}\\
[D,D_{a,b}]=D_{D(a),b}+D_{a,D(b)} \text{ for all } D\in \mathrm{Der}(A).
\end{gather} 
Derivations of this form are called \emph{inner}; they arise as a particular case of the general theory of inner derivations of nonassociative algebras \cite[Sec.~II.3]{MR210757}. The last property shows that inner derivations $\mathrm{Inner}(A)$ form an ideal of the Lie algebra $\mathrm{Der}(A)$ of all derivations of $A$. It is known \cite[Sec.~2.35]{MR1752782} that for an alternative algebra $A$, the vector space
$\mathfrak{sl}_3\otimes A\oplus \mathrm{Inner}(A)$ 
has a Lie algebra structure given by
\begin{gather*}
[x\otimes a,y\otimes b]=
[x,y]\otimes a\cdot b+\{x,y\}\otimes[a,b]+\frac13\mathrm{tr}(xy)D_{a,b},\\
[D_{a,b},x\otimes c]=x\otimes D_{a,b}(c),\\
[D_{a,b},D_{c,d}]=D_{D_{a,b}(c),d}+D_{c,D_{a,b}(d)}.
\end{gather*}
(Here and below we denote by $\{x,y\}:=\frac12(xy+yx)-\frac13\mathrm{tr}(xy)I_3$ the standard $\mathfrak{sl}_3$-module projection $S^2\mathfrak{sl}_3\twoheadrightarrow \mathfrak{sl}_3$.) However, inner derivations are not functorial, and hence this construction does not give a functor from the category of alternative algebras to the category of Lie algebras. However, this is remedied if one passes to the universal central extension of this latter algebra, described as follows \cite[Th.~4.13]{MR1752782}. Guided by Equations \eqref{eq:antisym} and \eqref{eq:cyclic}, one associates to the given alternative algebra $A$ the vector space 
 \[
\calB(A):=\Lambda^2(A)/(ab\wedge c+bc\wedge a+ca\wedge b\colon a,b,c\in A)    
 \]
which one can think of as a functorial version of the space of $\mathrm{Inner}(A)$. Furthermore, one defines on the vector space 
 \[
\mathsf{ABG}(A):=  \mathfrak{sl}_3\otimes A\oplus\calB(A)   
 \]
a skew-symmetric bracket by
\begin{gather}
\label{eq:ABG1}
[x\otimes a,y\otimes b]=
[x,y]\otimes a\cdot b+\{x,y\}\otimes[a,b]+\frac{\mathrm{tr}(xy)}3 a\wedge b,\\
[a\wedge b,x\otimes c]=x\otimes D_{a,b}(c),\label{eq:ABG2}\\
[a\wedge b,c\wedge d]=D_{a,b}(c)\wedge d+c\wedge D_{a,b}(d).\label{eq:ABG3}
\end{gather}
One can show that these formulas make $\mathsf{ABG}(A)$ into a Lie algebra that one calls the \emph{Allison--Benkart--Gao Lie algebra} associated to $A$. Clearly, the construction $\mathsf{ABG}(A)$ is functorial, and produces a Lie algebra in the category of $\mathfrak{sl}_3$-modules having only trivial and adjoint components. Shang proves in \cite[Th.~2.2]{shang2025allisonbenkartgaofunctorcyclicityfree} that for each such Lie algebra $\mathfrak{g}$, the multiplicity of the adjoint component has an alternative algebra structure, which he calls the \emph{Berman--Moody functor}. Furthermore, he shows in \cite[Th.~2.3]{shang2025allisonbenkartgaofunctorcyclicityfree} that $\mathsf{ABG}$ is the left adjoint of the Berman--Moody functor. Thus, this situation is parallel to that discussed in \cite{MR4235202}, where the functorial \emph{Tits--Allison--Gao Lie algebra} $\mathsf{TAG}(J)$ built out of a Jordan algebra $J$ is studied; it is a Lie algebra in the category of $\mathfrak{sl}_2$-modules having only trivial and adjoint components. The multiplicity of the adjoint module in such a Lie algebra has a natural Jordan algebra structure; this defines the \emph{Tits functor} from the category of such Lie algebras to the category of Jordan algebras, and $\mathsf{TAG}$ is the left adjoint of the Tits functor, so that in particular $\mathsf{TAG}(\mathrm{Jord}(V))$ is the free Lie algebra the category of $\mathfrak{sl}_2$-modules having only trivial and adjoint components. The conjecture proposed in \cite{MR4235202} asserts that the homology $H_\bullet(\mathsf{TAG}(\mathrm{Jord}(V)),\k)$ has no trivial or adjoint components in homological degrees greater than one; if it is true, it would give character formulas for free Jordan algebras. Shang proposed the following analogue of this conjecture for free alternative algebras.

\begin{conjecture}[{\cite[Conjecture 1]{shang2025allisonbenkartgaofunctorcyclicityfree}}]\label{conj:conj1}
Let $\Alt(V)$ denote the free alternative algebra generated by a finite-dimensional vector space $V$. The $\mathfrak{sl}_3$-module 
 \[
H_k(\mathsf{ABG}(\Alt(V)),\k)     
 \]
has no trivial or adjoint component for $k>1$. 
\end{conjecture}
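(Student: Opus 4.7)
The plan is to disprove Conjecture \ref{conj:conj1} by deriving strong numerical consequences from it and then confronting them with the few cases in which free alternative algebras are explicitly understood.

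First, in the spirit of \cite[Sec.~1.11]{MR4235202}, I would prove a ``residue lemma'' that extracts the multiplicities of the trivial and adjoint $\mathfrak{sl}_3$-isotypic components in $H_\bullet(\mathsf{ABG}(\Alt(V)),\k)$ from the $GL(V)$-character of $\Alt(V)$. The Chevalley--Eilenberg complex of $\mathsf{ABG}(\Alt(V))$ is, as a graded vector space, $\Lambda^\bullet(\mathfrak{sl}_3\otimes \Alt(V) \oplus \calB(\Alt(V)))$, and by Pieri/plethysm rules in the category of $\mathfrak{sl}_3$-modules one can, degree by degree, read off how many trivial and adjoint copies should appear in homology. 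Under Conjecture \ref{conj:conj1}, all of this multiplicity must live in homological degrees $\leq 1$, which produces a closed-form identity between the $GL(V)$-character of $\Alt(V)$ and explicit plethystic expressions in the inputs.

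Having this identity, I would specialize $V$ and compare. In the two-generator case, Artin's theorem gives $\Alt(V)=\Ass(V)$, whose bigraded character is tautologically known, so the identity becomes a concrete numerical prediction that can be tested degree by degree; the expectation, from the heuristic of Mathieu about three interacting relation types, is that a mismatch appears fairly high up, and the paper announces such a mismatch at degree $17$. In the one odd generator case, the explicit description of Shestakov and Zhukavets \cite{MR2355693} plays the same role, with a failure expected by degree $10$; in the three-generator case, Iltyakov's description \cite{MR781229} should produce a failure as early as degree $7$. Finally, at the purely operadic level, the same formula can be applied symbolically: one treats the conjecture as a \emph{definition} of an $S_n$-character for the component $\Alt(n)$ and checks whether the resulting virtual $S_n$-representation has nonnegative multiplicities. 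Any negative multiplicity obstructs the conjecture without any reference to a specific generator count; I would scan for the smallest such $n$, with $n=10$ as a natural target.

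The main obstacle is computational bookkeeping rather than conceptual: one must manipulate $GL_d\times \mathfrak{sl}_3$-characters and Schur/plethystic expressions up to sufficiently high total degree, and correctly translate the character data of the alternative algebras of Shestakov--Zhukavets and Iltyakov (in particular the superalgebra conventions) into the setting of the residue lemma. Setting up the lemma itself should be a fairly direct adaptation of \cite[Sec.~1.11]{MR4235202}, exploiting that $\mathsf{ABG}(A)$ is built from the two simplest nontrivial $\mathfrak{sl}_3$-modules; the error-prone part will be the implementation of the plethystic identities in \texttt{SageMath} and, for the virtual-module obstruction, the careful interpretation of signs so that a genuine non-effectivity, rather than an artefact of conventions, is isolated.
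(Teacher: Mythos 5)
Your proposal matches the paper's strategy essentially line by line: establish the residue lemma (the paper's Lemma~\ref{lm:residue}) as an analogue of \cite[Sec.~1.11]{MR4235202}, then test it against Artin's theorem for two generators (failure at degree $17$), Shestakov--Zhukavets for one odd generator (degree $10$), Iltyakov for three generators (degree $7$), and finally apply the prediction symbolically to $\Alt(10)$ to detect non-effectivity of the virtual $S_{10}$-character. This is precisely how the paper proceeds, down to the \texttt{SageMath} bookkeeping you flag as the error-prone step.
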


It is shown in \cite[Lemma 4.1]{shang2025allisonbenkartgaofunctorcyclicityfree} that there exist unique elements $a(V)$ and $b(V)$ in the augmentation ideal of the Grothendieck ring of $GL(V)$ for which we have the following equalities in the Grothendieck ring of $GL(V)\times PSL_3$:
\begin{gather*}
[\lambda(a(V)[L(\alpha_1+\alpha_2)]+b(V)[L(0)])\colon [L(0)]]=[\k]\\
[\lambda(a(V)[L(\alpha_1+\alpha_2)]+b(V)[L(0)])\colon [L(\alpha_1+\alpha_2)]]=-[V].
\end{gather*}
Here $\lambda$ is the homological $\lambda$-operation satisfying $\lambda(x+y)=\lambda(x)\lambda(y)$ and given on each effective class $x=[U]$ by 
 \[
\lambda(x)=\sum_{k\ge 0}(-1)^k[\Lambda^k(U)].     
 \]
The proof of \cite[Lemma 4.1]{shang2025allisonbenkartgaofunctorcyclicityfree} gives an explicit recursive procedure for computing $a(V)$ and $b(V)$; in Appendix \ref{sec:appendix}, we give the \texttt{SageMath} code implementing that procedure.  

According to \cite[Th.~4.1]{shang2025allisonbenkartgaofunctorcyclicityfree}, Conjecture \ref{conj:conj1} implies the following conjecture on character formulas for $\Alt(V)$ and $\calB(\Alt(V))$ in the Grothendieck ring of $GL(V)$. 

\begin{conjecture}[{\cite[Conjecture 2]{shang2025allisonbenkartgaofunctorcyclicityfree}}]\label{conj:conj2}
Let $\Alt(V)$ denote the free alternative algebra generated by a finite-dimensional vector space $V$. In the Grothendieck ring of $GL(V)$, we have
 \[
[\Alt(V)]=a(V), \quad [\calB(\Alt(V))=b(V)].     
 \]
\end{conjecture}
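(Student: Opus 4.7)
The plan is to test Conjecture~\ref{conj:conj2} directly against the cases where free alternative (super)algebras are explicitly understood. First, I would implement the recursive procedure from \cite[Lemma~4.1]{shang2025allisonbenkartgaofunctorcyclicityfree} in a computer algebra system and compute $a(V)$ and $b(V)$ as virtual $GL(V)$-characters, degree by degree. If Conjecture~\ref{conj:conj2} is correct, then in every graded piece these must agree with the true characters of $\Alt(V)$ and $\calB(\Alt(V))$, giving infinitely many concretely testable identities.

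Second, I would compare the prediction with the situations in which the answer is known by independent means: (1) $\dim V = 1$, where $\Alt(V)$ coincides with the polynomial algebra; (2) $\dim V = 2$, where by Artin's theorem $\Alt(V)$ equals the free associative algebra on two generators, with character given by the classical Witt/necklace formula; (3) the free alternative superalgebra on one odd generator, described explicitly by Shestakov and Zhukavets \cite{MR2355693}; and (4) $\dim V = 3$, where the Hilbert series and symmetric group action are accessible in low degrees via Iltyakov \cite{MR781229}. A single graded piece in which the predicted character disagrees with the known character already falsifies Conjecture~\ref{conj:conj2}.

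To convert such a numerical discrepancy into a clean disproof of the underlying homological Conjecture~\ref{conj:conj1}, I would establish a residue-type lemma, analogous to \cite[Sec.~1.11]{MR4235202}, isolating the contributions of trivial and adjoint $\mathfrak{sl}_3$-isotypic components in the Euler characteristic of the Chevalley--Eilenberg complex of $\mathsf{ABG}(\Alt(V))$. Independently of any concrete example, I would also check whether the predicted character, viewed through the $S_n$-action on the arity-$n$ component of the alternative operad, is effective; a virtual but non-effective prediction at any $n$ is an immediate obstruction uniform in $\dim V$.

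The main obstacle, I expect, is computational reach rather than conceptual difficulty: the conjecture may well survive through high degree, so one must push the recursion well beyond the first few easy terms, and the $\lambda$-operation couples $a(V)$ and $b(V)$ in a way that makes the recursion progressively expensive. A secondary subtlety is translation: the Shestakov--Zhukavets and Iltyakov descriptions are presented in forms quite different from the closed expression produced by the recursive procedure, so matching them up to high enough degree to exhibit a discrepancy is itself a nontrivial bookkeeping exercise.
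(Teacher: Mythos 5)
Your proposal outlines essentially the strategy the paper follows: a residue-type lemma in the style of Kashuba--Mathieu (Lemma~\ref{lm:residue}) isolating trivial and adjoint multiplicities, comparison against the explicitly known cases (Artin's theorem for two generators, Shestakov--Zhukavets for one odd generator, Iltyakov for three generators), and a check that the predicted virtual $S_n$-character on $\Alt(n)$ is effective. The paper carries this out and finds discrepancies at degree $17$ for two generators, degree $10$ for one odd generator, degree $7$ for three generators, and a non-effective $S_{10}$-module prediction, exactly the obstructions your plan anticipates.
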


\section{Operads, algebras, and superalgebras}\label{sec:operads}

This section, setting an operad theory context for the conjecture of Shang, is closely related to the corresponding section of \cite{DH-KM}. We start by developing some basics of the theory of operads to assist the readers whose intuition is coming from nonassociative ring theory. We refer to \cite[Sec.~2.3]{MR4675074} for a more general introductory text and to the monograph \cite{MR2954392} for systematic information on operads. Our viewpoint on operads is largely informed by the language of species, or analytic functors, originating in the work of Joyal \cite{MR633783} (referred to as Schur functors in \cite{MR2954392}). Recall that, to a sequence $\{K(n)\}_{n\ge 0}$, where each $K(n)$ is a right module over the symmetric group $S_n$, one can associate a functor $\calK\colon \Vect\to\Vect$ given by the formula
 \[
\calK(V):= \bigoplus_{n\ge 0}K(n)\otimes_{\k S_n}V^{\otimes n} .   
 \]
Functors like that are called \emph{analytic functors}: their value on $V$ is a ``categorified Taylor series'' with the ``iterated derivatives'' $K(n)$ (one can view the tensoring over $\k S_n$ as the categorical division by $n!$). An important class of analytic functors come from free algebras of various kinds. For instance, if we denote by $\Alt(n)$ the subspace of the free alternative algebra $\Alt(x_1,\ldots,x_n)$ consisting of all elements of degree exactly one in each generator $x_1,\ldots,x_n$, this vector space has a natural right $S_n$-action (by permutations of the generators $x_1,\ldots,x_n$), and hence the collection of all these spaces 
 \[
\Alt:=\{\Alt(n)\}_{n\ge 1}
 \]
gives rise to an analytic functor. The term $\Alt(n)\otimes_{\k S_n}V^{\otimes n}$ should be thought of as the result of ``substitution'' of $n$ elements of $V$ into multilinear operations with $n$ arguments that can be defined on alternative algebras; thus, $\Alt(V)$ is naturally identified with the free alternative algebra generated by $V$. Additionally, the natural map 
 \[
\Alt(\Alt(V))\to\Alt(V)     
 \]
which forgets the nested structure of alternative operations on the left, considered together with the obvious inclusion 
$V\hookrightarrow\Alt(V)$ makes the analytic functor $\Alt$ into a monad; monads like that are called \emph{operads}.

Note that over a field of characteristic zero, every identity is equivalent to a multilinear identity, and in particular the alternative identities are equivalent to the multilinear identities
\begin{gather*}
(ab)c-a(bc)+(ba)c-b(ac)=0,\\
(ab)c-a(bc)+(ac)b-a(cb)=0.
\end{gather*}
This allows the reader whose intuition comes from the operad theory to present the operad $\Alt$ by means of generators and relations.

We shall also use the symmetric monoidal category on analytic functors given by the so called \emph{Cauchy product}; if $\calF=\{F(n)\}_{n\ge 0}$ and $\calG=\{G(n)\}_{n\ge 0}$ are two analytic functors, we may define a new analytic functor $\calF\otimes\calG$ whose $n$-th component is given by  
 \[
\bigoplus_{k+l=n}\Ind_{S_k\times S_l}^{S_n}(F(k)\otimes G(l)).     
 \]
It is easy to check that this product is associative, admits symmetry isomorphisms $\calF\otimes\calG\to \calG\otimes\calF$, and has a unit $\mathsf{1}$, and that this data satisfies all axioms of a symmetric monoidal category. In particular, one can talk about Lie algebras in this category, which historically are called \emph{twisted Lie algebras} \cite{MR513566}. Concretely, a twisted Lie algebra $\mathfrak{g}$ may be viewed as a Lie algebra of the form
 \[
\mathfrak{g}=\bigoplus_{n\ge 0}\mathfrak{g}(n)     
 \]
where each $\mathfrak{g}(n)$ is a right $\k S_n$-module, and the Lie bracket maps $\mathfrak{g}(n)\otimes \mathfrak{g}(m)$ to $\mathfrak{g}(n+m)$ and is $S_n\times S_m$-equivariant. Clearly, if $\mathfrak{g}$ is a twisted Lie algebra and $V$ is a vector space, then we may view $\mathfrak{g}$ as an analytic functor and obtain the vector space $\mathfrak{g}(V)$; the twisted Lie algebra structure of $\mathfrak{g}$ induces an honest Lie algebra structure on $\mathfrak{g}(V)$. In particular, any Lie algebra $L$ gives rise to a twisted Lie algebra, if one considers the ``constant'' analytic functor 
\[
\mathsf{1}_L(n)=
\begin{cases}
L, \quad n=0,\\
0, \quad n>0.
\end{cases}     
 \]
Another viewpoint on this same definition is that the analytic functor $\mathfrak{g}$ admits an ``action'' of the analytic functor of the Lie operad, that is, a natural transformation $\Lie\circ\mathfrak{g}\to\mathfrak{g}$ between the composition of functors $\Lie$ and $\mathfrak{g}$ and the functor $\mathfrak{g}$: indeed, a Lie algebra structure on $\mathfrak{g}(V)$ can be viewed as a map $\Lie(\mathfrak{g}(V))\to \mathfrak{g}(V)$ that is consistent with the operad structure of $\Lie$, and this map is natural in $V$. In general, for an operad $\calO$, the notion of a twisted $\calO$-algebra is equivalent to that of a left module over the operad $\calO$. One can also define right modules over an operad, a right $\calO$-module is an analytic functor $\calK$ together with a natural transformation $\calK\circ\calO\to \calK$. Right modules are not algebras, but rather constructions depending on algebras in a way that is stable under algebra endomorphisms, for example, the commutator quotient $A/[A,A]$ of an associative algebra (in the context of algebras satisfying polynomial identities, the notion corresponding to that of an operadic right module is known as a $T$-space).

The following result lifts the Allison--Benkart--Gao construction to the level of twisted Lie algebras, and sheds new light on the functorial properties of that construction. Recall that if $\calR$ is a right $\calO$-module and $\calL$ is a left $\calO$-module, we can form the relative composition $\calR\circ_\calO\calL$ analogous to tensor product of a right and a left module over a ring, see \cite[Sec.~5.1.5]{MR2494775}.

\begin{proposition}\leavevmode
\begin{enumerate}
\item There exists a twisted Lie algebra $\calA\calB\calG$ such that we have a Lie algebra isomorphism 
 \[
 \mathsf{ABG}(\Alt(V))\cong \calA\calB\calG(V).    
 \]
\item The twisted Lie algebra $\calA\calB\calG$ is a right module over the alternative operad. Moreover, for every alternative algebra
$A$, we have a natural isomorphism of left $\Lie$-modules
 \[
\mathsf{1}_{\mathsf{ABG}(A)}\cong \calA\calB\calG\circ_{\Alt} \mathsf{1}_A.     
 \]
\end{enumerate} 
\end{proposition}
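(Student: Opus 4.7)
The plan is to construct the species $\calA\calB\calG$ by transcribing the formulas defining $\mathsf{ABG}(\Alt(V))$ to the operadic level, and then to derive the remaining assertions as essentially formal consequences of that construction together with the functoriality of $\mathsf{ABG}$. The first step is to define the species
\[
\calA\calB\calG := \mathfrak{sl}_3\otimes\Alt \oplus \calB\calA,
\]
where $\calB\calA$ is the quotient of the species $\Lambda^2(\Alt)$ by the subspecies generated by the cyclic relations $ab\wedge c+bc\wedge a+ca\wedge b$; this makes sense because the binary product is an equivariant species map $\Alt\otimes\Alt\to\Alt$ coming from operadic composition, and the wedge is likewise equivariant. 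The candidate bracket on $\calA\calB\calG$ is given by the formulas \eqref{eq:ABG1}--\eqref{eq:ABG3}; each of these involves only operadic compositions (the product $a\cdot b$, the commutator $[a,b]$, and the inner derivation $D_{a,b}$ acting on $c$ via $[L_a,L_b]+[R_a,R_b]+[L_a,R_b]$), the wedge, and the $\mathfrak{sl}_3$-bracket, so each defines an $S_k\times S_l$- or $S_k\times S_l\times S_m$-equivariant map of species.

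To complete part (1), I would verify the twisted Lie algebra axioms at the species level. Antisymmetry is immediate from the definitions. The Jacobi identity amounts to a collection of $S_n$-equivariant identities between multilinear operadic expressions; these hold after evaluation at any vector space $V$ because $\mathsf{ABG}(\Alt(V))$ is a Lie algebra by \cite[Th.~4.13]{MR1752782}, and multilinear identities that hold on $\Alt(V)$ for all sufficiently large $V$ hold at the species level. Evaluating the resulting twisted Lie algebra on $V$ then recovers $\mathsf{ABG}(\Alt(V))$ by construction, giving the isomorphism in part (1).

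For part (2), the right $\Alt$-action on $\calA\calB\calG$ encodes the functoriality of $\mathsf{ABG}$ restricted to free alternative algebras: given a morphism $\Alt(V)\to\Alt(W)$, determined by a choice of elements of $\Alt(W)$ indexed by a basis of $V$, the induced Lie algebra morphism $\mathsf{ABG}(\Alt(V))\to\mathsf{ABG}(\Alt(W))$ is precisely a substitution operation on the species $\calA\calB\calG$, and associativity of this action follows from associativity of composition of algebra morphisms. To identify $\calA\calB\calG\circ_\Alt \mathsf{1}_A$ with $\mathsf{1}_{\mathsf{ABG}(A)}$, the plan is to invoke the coequalizer description
\[
\calA\calB\calG\circ \Alt\circ \mathsf{1}_A\rightrightarrows \calA\calB\calG\circ \mathsf{1}_A,
\]
in which the two arrows identify ``applying an $\Alt$-operation inside $\calA\calB\calG$ and then evaluating on $A$'' with ``first evaluating that $\Alt$-operation using the $\Alt$-algebra structure of $A$''. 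Term by term, the summand $\mathfrak{sl}_3\otimes\Alt$ contributes $\mathfrak{sl}_3\otimes A$ via the standard absorption isomorphism $\Alt\circ_\Alt\mathsf{1}_A\cong \mathsf{1}_A$, and the summand $\calB\calA$ contributes $\calB(A)$ for analogous reasons, so the coequalizer is $\mathsf{ABG}(A)$; the induced left $\Lie$-module structure coincides with the Lie bracket on $\mathsf{ABG}(A)$ since the bracket formulas are the same.

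The main obstacle will be verifying the compatibility of the right $\Alt$-action with the cyclic quotient defining $\calB\calA$. Concretely, one must check that substituting an arbitrary $\Alt$-operation into any argument of a cyclic relator yields another cyclic relator; by operadic induction this reduces to the binary case, which follows from the identity $D_{ab,c}+D_{bc,a}+D_{ca,b}=0$ of \eqref{eq:cyclic} together with the antisymmetry \eqref{eq:antisym}. One must also verify that the right $\Alt$-action intertwines the bracket on $\calA\calB\calG$ with operadic composition; this is essentially the species-level reformulation of the content of \cite[Th.~4.13]{MR1752782}, but requires careful tracking of equivariance and arities.
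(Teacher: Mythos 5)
Your proposal follows essentially the same route as the paper's proof: both define the species $\calA\calB\calG = \mathfrak{sl}_3\otimes\Alt\oplus\calB(\Alt)$ with $\calB(\Alt)$ the cyclic quotient of $\Lambda^2(\Alt)$ at the species level, transcribe the bracket formulas, note that the Lie algebra axioms hold by multilinearity since they hold after evaluation on every $V$, observe that the right $\Alt$-module structure is inherited from functoriality under endomorphisms of $\Alt(V)$, and compute the relative composition product $\calA\calB\calG\circ_\Alt\mathsf{1}_A$ summand by summand.

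One remark on your final paragraph, where you identify as ``the main obstacle'' the verification that the cyclic relators are closed under substitution and attribute this to the identity $D_{ab,c}+D_{bc,a}+D_{ca,b}=0$. This is a misattribution: the space of cyclic relators $(ab\wedge c+bc\wedge a+ca\wedge b)$ is closed under substitution of arbitrary operadic elements into $a$, $b$, or $c$ tautologically, because the defining expression is multilinear in $a,b,c$ ranging over all of $\Alt$ --- substituting into one slot merely produces another cyclic relator with a composite argument. This is exactly what the paper's proof means by $\calB(\Alt(V))$ being ``manifestly stable under all endomorphisms of $\Alt(V)$'', and no derivation identity is invoked. The identity \eqref{eq:cyclic} plays a different role: it is needed (together with \eqref{eq:antisym}) to ensure that the assignment $a\wedge b\mapsto D_{a,b}$ is well-defined on the quotient $\calB(\Alt)$, i.e.\ that the bracket formulas \eqref{eq:ABG2}--\eqref{eq:ABG3} descend. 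That point is genuinely needed for part (1), not for the right $\Alt$-module structure in part (2). With this correction the argument is in complete agreement with the paper.
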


\begin{proof}
Let $V$ be a finite-dimensional vector space. The free alternative algebra $\Alt(V)$ can be described as the value of the analytic functor of the alternative operad on the vector space $V$ \cite{MR2954392}:
 \[
\Alt(V)\cong\bigoplus_{n\ge 1}\Alt(n)\otimes_{\k S_n}V^{\otimes n}.     
 \]
Moreover, the functor 
 \[
\calB(\Alt(V))\cong \Lambda^2(\Alt(V))/(ab\wedge c+bc\wedge a+ca\wedge b \colon a,b,c\in \Alt(V))     
 \]
is also analytic. Indeed, we may take the obvious functorial version of the definition of $\calB$, looking, for each $n$, at the elements of $\calB(\Alt(x_1,\ldots,x_n))$ consisting of all elements of degree exactly one in each generator $x_1,\ldots,x_n$. Equivalently, one may define 
 \[
B:=\calB(\Alt)= \Lambda^2(\Alt)/(ab\wedge c+bc\wedge a+ca\wedge b \colon a,b,c\in \Alt) ,   
 \]
where the exterior power is now taken with respect to the Cauchy product. We can now define a twisted Lie algebra structure on the analytic functor
 \[
\calA\calB\calG=\mathfrak{sl}_3\otimes\Alt\oplus \calB(\Alt)     
 \]
by the same formulas \eqref{eq:ABG1}--\eqref{eq:ABG3}. We conclude that 
 \[
\mathsf{ABG}(\Alt(V))=\mathfrak{sl}_3\otimes\Alt(V)\oplus \calB(\Alt(V))\cong (\mathfrak{sl}_3\otimes\Alt\oplus \calB(\Alt))(V)   
 \]
is an analytic functor and a twisted Lie algebra (one can even view it as the Allison--Benkart--Gao construction applied to the twisted alternative algebra $\Alt$), and that $\mathsf{ABG}(\Alt(V))\cong \calA\calB\calG(V)$. 

Furthermore, the operad $\Alt$ is tautologically a right $\Alt$-module, and $\calB(\Alt)$ is a right $\Alt$-module since $\calB(\Alt(V)))$ is manifestly stable under all endomorphisms of $\Alt(V)$, so the analytic functor $\calA\calB\calG$ is a right $\Alt$-module. Moreover,  the Lie algebra structure on $\mathsf{ABG}(\Alt(V))$ clearly commutes with all endomorphisms of $\Alt(V)$, so the twisted Lie algebra structure on $\calA\calB\calG$ commutes with the right $\Alt$-module action.. It remains to notice that
\begin{gather*}
\Alt \circ_{\Alt} \mathsf{1}_A\cong \mathsf{1}_A,\\
\calB(\Alt)\circ_{\Alt} \mathsf{1}_A\cong \mathsf{1}_{\calB(A)},
\end{gather*}
where the first isomorphism is obvious, and the second follows from the fact that $\calB(\Alt)$ is defined by the same formula as $\calB(A)$, and the relative composition product ``computes'' all the products in $A$ after the evaluation of the analytic functor $\calB(\Alt)(A)$ by taking the corresponding coequalizer. Since the twisted Lie algebra structure on $\calA\calB\calG$ commutes with the right $\Alt$-module action, the induced Lie algebra structure on the relative composite product matches the Allison--Benkart--Gao Lie algebra structure on the vector space $\mathsf{ABG}(A)$.
\end{proof}

Let us also outline a functorial viewpoint on Lie algebra homology; for simplicity, we shall focus on the homology with trivial coefficients. In classical textbooks, one would often find the either the definition of the homology as a derived functor
 \[
H_\bullet(L,\k):=\mathrm{Tor}_\bullet^{U(L)}(\k,\k),     
 \]
or a definition via the Chevalley--Eilenberg complex  
 \[
H_\bullet(L,\k):=H_\bullet(\Lambda(L),d),     
 \]
where in $\Lambda(L)$ we place $\Lambda^k(L)$ in the homological degree $k$, and the differential $d$ is given by the formula
 \[
d(x_1\wedge\cdots\wedge x_k)=\sum_{1\le i< j\le k}(-1)^{i+j-1}[x_i,x_j]\wedge x_1\wedge\cdots\wedge \hat{x}_{i}\cdots\wedge \hat{x}_{j}\wedge\cdots\wedge x_k,     
 \] 
where the notation $\hat{x}_i$ means that this factor must be omitted. While these definitions are completely unambiguous, they disguise one conceptual aspect of the definition. The operad of Lie algebras is Koszul, and its Koszul dual is the operad of commutative associative algebras. This allows us to interpret the homology of a Lie algebra $L$ as the homology of its \emph{bar construction} \cite{MR2954392}. In our particular case, the space of exterior forms $\Lambda(L)$ is really a disguise of $S^c(sL)$, the cofree conilpotent cocommutative coassociative coalgebra on $sL$ (the vector space $L$ homologically shifted by one using an odd element $s$), and $d$ is the unique coderivation of $S^c(sL)$ extending the map $S^c(sL)\twoheadrightarrow S^2(sL)\to sL$ made of the projection onto $S^c(sL)\twoheadrightarrow S^2(sL)$ and the map $S^2(sL)\to sL$ corresponding to the Lie bracket (a skew-symmetric bilinear map $V\times V\to V$ is the same as symmetric bilinear map $sV\times sV\to sV$); note that, similarly how derivations of free algebras are determined by restriction to generators, coderivations of cofree conilpotent coalgebras are determined by corestriction to cogenerators. With that in mind, we obtain the following result.

\begin{proposition}\label{prop:Kunneth}
We have 
 \[
H_\bullet(\mathsf{ABG}(V),\k)\cong  H_\bullet(\calA\calB\calG,\k)(V).    
 \]
\end{proposition}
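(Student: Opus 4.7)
The plan is to lift the Chevalley--Eilenberg complex to the level of analytic functors and then exploit the fact that evaluation on a finite-dimensional vector space is a symmetric monoidal exact functor in characteristic zero, so that it commutes both with the bar construction and with the passage to homology.

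First, following the discussion that immediately precedes the proposition, I would form the Chevalley--Eilenberg complex of the twisted Lie algebra $\calA\calB\calG$ of the preceding proposition internally to the symmetric monoidal category of analytic functors equipped with the Cauchy product: the underlying graded analytic functor is the cofree conilpotent cocommutative coalgebra $S^c(s\calA\calB\calG)$ computed with respect to the Cauchy product, and the differential is the unique coderivation whose corestriction to cogenerators is the twisted Lie bracket of $\calA\calB\calG$. By definition, the homology of this complex of analytic functors is $H_\bullet(\calA\calB\calG,\k)$.

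Next, I would observe that the evaluation $\calF\mapsto \calF(V)$ is symmetric monoidal from analytic functors with the Cauchy product to $\Vect$ with the usual tensor product: the explicit formula
 \[
 (\calF\otimes\calG)(k)=\bigoplus_{i+j=k}\Ind_{S_i\times S_j}^{S_k}\bigl(F(i)\otimes G(j)\bigr),
 \]
upon application to $V^{\otimes k}$ and taking $S_k$-coinvariants, reduces to $\calF(V)\otimes\calG(V)$. Moreover, evaluation is exact, since in characteristic zero symmetric group coinvariants are exact. Symmetric monoidality then guarantees that evaluation sends $S^c(s\calA\calB\calG)$ to $S^c(s\calA\calB\calG(V))=\Lambda(\calA\calB\calG(V))$, and the two coderivations match because each is uniquely determined by corestriction to cogenerators and the twisted bracket on $\calA\calB\calG$ evaluates to the Lie bracket of $\calA\calB\calG(V)$ by construction. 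Exactness then yields
 \[
H_\bullet(\calA\calB\calG,\k)(V) \cong H_\bullet(\calA\calB\calG(V),\k).
 \]

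Combining this identification with the Lie algebra isomorphism $\calA\calB\calG(V)\cong \mathsf{ABG}(\Alt(V))$ provided by the preceding proposition gives the claimed statement. The only step that requires any attention is the compatibility of the Cauchy-product bar construction with the ordinary Chevalley--Eilenberg complex after evaluation; I do not expect this to present a serious obstacle, as it is a routine unwinding of the universal properties of cofree conilpotent cocommutative coalgebras and of coderivations determined by corestriction to cogenerators.
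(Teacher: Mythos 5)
Your proof is correct and takes essentially the same route as the paper: both form the Chevalley--Eilenberg complex at the level of analytic functors with the Cauchy product and then observe that evaluation at $V$ commutes with the construction and with taking homology. The paper compresses the compatibility step into the remark that $S^c(sL)\cong s\Lie^{\ac}(L)$ and that the differential arises from the Koszul twisting cochain $\kappa\colon\Lie^{\ac}\to\Lie$, so the whole complex is manifestly built from natural transformations of analytic functors, whereas you unpack the same point via symmetric monoidality of evaluation and exactness of $S_n$-coinvariants in characteristic zero; these are two phrasings of a single argument, and your version is slightly more self-contained.
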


\begin{proof}
The underlying vector space of the Chevalley--Eilenberg complex of the Lie algebra $\mathsf{ABG}(V)$ is, as we already indicated, 
 \[
(S^c(s\mathsf{ABG}(V)),d),   
 \]
and, if we note that $S^c(sV)\cong s\Lie^{\ac}(V)$, where $\Lie^{\ac}$ is the Koszul dual cooperad of the Lie operad \cite[Sec.~7.2]{MR2954392}, the differential $d$ comes from the Koszul twisting cochain $\kappa\colon\Lie^{\ac}\to\Lie$. Thus, computing the differential (and its homology) commutes with the evaluation of analytic functors on~$V$.
\end{proof}

\begin{corollary}\label{cor:SchurWeyl}
Conjecture \ref{conj:conj1} holds for all choices of the vector space of generators $V$ if and only if the $\mathfrak{sl}_3$-module 
 \[
H_k(\calA\calB\calG,\k) 
 \]
has no trivial or adjoint component for $k>1$. 
\end{corollary}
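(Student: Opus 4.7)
The plan is to combine Proposition \ref{prop:Kunneth} with Schur--Weyl reciprocity. That proposition supplies an isomorphism
\[
H_k(\mathsf{ABG}(\Alt(V)),\k)\cong H_k(\calA\calB\calG,\k)(V),
\]
and it is $\mathfrak{sl}_3$-equivariant because $\mathfrak{sl}_3$ acts on $\calA\calB\calG=\mathfrak{sl}_3\otimes\Alt\oplus\calB(\Alt)$ only through the external tensor factor, which commutes with the evaluation of analytic functors on $V$ and with the differential computing the homology. Hence the corollary reduces to comparing the $\mathfrak{sl}_3$-isotypic structure of the twisted module $H_k(\calA\calB\calG,\k)$ with the $\mathfrak{sl}_3$-isotypic structures of all of its evaluations.

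One direction is immediate: vanishing of the trivial and adjoint $\mathfrak{sl}_3$-isotypic components of $H_k(\calA\calB\calG,\k)$ is preserved by evaluation on any $V$. For the converse, I would decompose arity by arity
\[
H_k(\calA\calB\calG,\k)=\bigoplus_n M_k(n)
\]
as a family of bimodules, with $\mathfrak{sl}_3$ acting on the left and $S_n$ acting on the right. Because the two actions commute, the trivial and adjoint $\mathfrak{sl}_3$-isotypic summands can be read off as right $\k S_n$-modules $M_k^{\mathrm{triv}}(n)$ and $M_k^{\mathrm{adj}}(n)$; their contributions to the trivial and adjoint $\mathfrak{sl}_3$-components of $H_k(\mathsf{ABG}(\Alt(V)),\k)$ are, respectively, $\bigoplus_n M_k^{\mathrm{triv}}(n)\otimes_{\k S_n}V^{\otimes n}$ and the same expression for $M_k^{\mathrm{adj}}(n)$ tensored with the adjoint $\mathfrak{sl}_3$-module.

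Assuming Conjecture \ref{conj:conj1} for every $V$, each of these tensor products vanishes for every $V$. Schur--Weyl duality then closes the argument: for $\dim V\geq n$, every irreducible right $\k S_n$-module appears as a direct summand of $V^{\otimes n}$, so $N\otimes_{\k S_n}V^{\otimes n}=0$ forces $N=0$ for any right $\k S_n$-module $N$. Applied in each arity, this yields $M_k^{\mathrm{triv}}(n)=M_k^{\mathrm{adj}}(n)=0$ for all $n$ and all $k>1$, as required. The only point needing a careful sanity check is the commutativity of the $\mathfrak{sl}_3$- and $S_n$-actions throughout the construction, which is transparent since $\mathfrak{sl}_3$ enters externally on a fixed tensor factor; I do not anticipate a substantial obstacle.
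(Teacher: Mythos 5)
Your proof is correct and takes essentially the same approach as the paper: the ``if'' direction follows directly from Proposition~\ref{prop:Kunneth}, and the ``only if'' direction uses Schur--Weyl duality to recover the $S_n$-module structure of the Taylor coefficients of the analytic functor $H_k(\calA\calB\calG,\k)$ from its evaluations on all $V$. Your writeup simply spells out in more detail what the paper compresses into the remark that ``the Taylor coefficients of an analytic functor can be uniquely reconstructed from its values.''
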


\begin{proof}
Proposition \ref{prop:Kunneth} implies the ``if'' part of the statement. The ``only if'' part, that is the assertion that vanishing of the trivial and the adjoint component of $H_k(\mathsf{ABG}(V),\k)$ for all $V$ implies the same for $H_k(\calA\calB\calG,\k)$, follows from the fact that the ``Taylor coefficients'' of an analytic functor can be uniquely reconstructed from its values using the Schur--Weyl duality \cite{MR1153249}.
\end{proof}

Everything discussed above admits an analogue for free alternative superalgebras: indeed, one may generalize the Allison--Benkart--Gao construction to alternative superalgebras and then simply replace the Lie algebra homology by the Lie superalgebra homology in the statement of Conjecture \ref{conj:conj1}. In fact, such generalizations cost nothing: over the years, the author of this note has been advertising the viewpoint that superalgebras over a given operad are merely algebras over the same operad in a larger symmetric monoidal category \cite{DotRev}, and hence various results that can be stated and proved in terms of the corresponding operad (e.g., the Poincaré--Birkhoff--Witt type theorems \cite{MR4300233} or the Nielsen--Schreier property \cite{MR4675074}) are automatically true for superalgebras if already proved for algebras. To give a yet another example of this approach, let us record the following result.

\begin{proposition}\label{prop:super}
Suppose that Conjecture \ref{conj:conj1} is true for all free alternative algebras. Then it is true for all free alternative superalgebras. 
\end{proposition}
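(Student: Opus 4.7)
The strategy is to observe that the operadic reformulation from Corollary~\ref{cor:SchurWeyl} turns Conjecture~\ref{conj:conj1} into a vanishing statement that is intrinsic to the twisted Lie algebra $\calA\calB\calG$ and so persists under evaluation in any symmetric monoidal category, including super vector spaces. Specifically, by Corollary~\ref{cor:SchurWeyl}, the truth of Conjecture~\ref{conj:conj1} for all finite-dimensional vector spaces is equivalent to the vanishing, for $k>1$, of the trivial and adjoint $\mathfrak{sl}_3$-isotypic sub-functors of the analytic functor $H_k(\calA\calB\calG,\k)$; no reference to a particular ambient category is present in that formulation.

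The next step is to upgrade the operadic framework of Section~\ref{sec:operads} to the super setting. For a super vector space $V$, the free alternative superalgebra is recovered as $\Alt(V)$, where $\Alt$ is interpreted as an analytic functor evaluated in the super category; the Allison--Benkart--Gao superalgebra $\mathsf{ABG}(\Alt(V))$ is then defined by the same formulas \eqref{eq:ABG1}--\eqref{eq:ABG3}, with Koszul signs inserted whenever odd arguments are swapped. The super analogue of Proposition~\ref{prop:Kunneth} now goes through verbatim: the Chevalley--Eilenberg complex of a Lie superalgebra still coincides with its bar construction with respect to the Koszul twisting cochain $\kappa\colon \Lie^{\ac}\to\Lie$, and this construction, being purely operadic, commutes with evaluation of analytic functors in any symmetric monoidal category. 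One thus obtains a natural isomorphism $H_\bullet(\mathsf{ABG}(\Alt(V)),\k)\cong H_\bullet(\calA\calB\calG,\k)(V)$ for every finite-dimensional super vector space~$V$.

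Combining the two observations concludes the argument. Assuming Conjecture~\ref{conj:conj1} holds for all ordinary vector spaces, the previous paragraph shows that the projections of $H_k(\calA\calB\calG,\k)$ onto its trivial and adjoint $\mathfrak{sl}_3$-isotypic sub-functors vanish for $k>1$. Since these sub-functors already vanish at the level of Taylor coefficients, they vanish when evaluated on any super vector space~$V$, and by the super version of Proposition~\ref{prop:Kunneth} this yields the desired vanishing for $H_k(\mathsf{ABG}(\Alt(V)),\k)$ when $\Alt(V)$ is the free alternative superalgebra on~$V$.

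The only delicate point is checking that the derivation of Proposition~\ref{prop:Kunneth} transplants faithfully to the super setting, i.e.\ that the Koszul duality between $\Lie$ and $\Com$, the identification of the Chevalley--Eilenberg complex with the bar construction, and the compatibility of this identification with analytic functor evaluation are all symmetric-monoidal in nature. This is exactly the ``cost-free'' generalization alluded to immediately before the statement of the proposition; concretely it amounts to careful bookkeeping of Koszul signs, and no new conceptual ingredient beyond the operadic material already in Section~\ref{sec:operads} is required.
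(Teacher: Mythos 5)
Your proof is correct and follows essentially the same route as the paper: evaluate the analytic functor $\calA\calB\calG$ on super vector spaces via the Koszul sign rule, invoke the super analogue of Proposition~\ref{prop:Kunneth} to commute Lie (super)algebra homology with analytic-functor evaluation, and then conclude from Corollary~\ref{cor:SchurWeyl} that vanishing of the trivial and adjoint isotypic components at the level of Taylor coefficients forces the same vanishing on any super vector space.
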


\begin{proof}
The free alternative superalgebra $\Alt(V_0\mid V_1)$ generated by the superspace $V_0\mid V_1$ can also be written as a value of the same analytic functor:
 \[
\Alt(V_0\mid V_1)\cong  \Alt(n)\otimes_{\k S_n}(V_0\mid V_1)^{\otimes n},   
 \]
where the latter formula uses the symmetric monoidal structure on the category of $\mathbb{Z}/2\mathbb{Z}$-graded vector spaces given by the Koszul sign rule 
 \[
\sigma(u\otimes v)=(-1)^{|u||v|}v\otimes u.     
 \]
The Lie superalgebra homology also admits a description as the homology of $(S^c(sL),d)$, where $S^c(sL)$ now denotes the cofree conilpotent cocommutative coassociative co-superalgebra generated by $sL$ (which is nothing but the cofree conilpotent cocommutative coassociative \emph{coalgebra} in the symmetric monoidal category of $\mathbb{Z}/2\mathbb{Z}$-graded vector spaces), and $d$ is the unique coderivation of $S^c(sL)$ extending the map 
 \[
S^c(sL)\twoheadrightarrow S^2(sL)\to sL    
 \]
made of the projection onto $S^c(sL)\twoheadrightarrow S^2(sL)$ and the map $S^2(sL)\to sL$ corresponding to the Lie superalgebra structure. 
The obvious analogue of Proposition~\ref{prop:Kunneth} holds, so 
 \[
H_\bullet(\mathsf{ABG}(\Alt(V_0\mid V_1)),\k)\cong H_\bullet(\calA\calB\calG,\k)(V_0\mid V_1),     
 \]
and the claim follows from Corollary \ref{cor:SchurWeyl}. 
\end{proof}

\section{Counterexamples to the conjecture}\label{sec:counterexamples}

\subsection{The conjecture fails for most free alternative superalgebras}

Our precise statement justifying the failure of the conjecture of Shang is the following result.

\begin{theorem}
Conjecture \ref{conj:conj1} fails for the free alternative superalgebra 
 \[
\Alt(x_1,\ldots,x_m\mid y_1,\ldots,y_n)   
 \]
whenever $m>1$ or $n>0$. 
\end{theorem}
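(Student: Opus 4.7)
The plan is to reduce the theorem to two base cases---the free alternative algebra on two even generators and the free alternative superalgebra on one odd generator---and then to exhibit explicit numerical mismatches in each of those cases.

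The reduction rests on a monotonicity principle: if Conjecture \ref{conj:conj1} fails for $\Alt(W)$, then it also fails for $\Alt(W')$ whenever $W$ is a direct summand of the superspace $W'$. Indeed, a splitting $W'\twoheadrightarrow W$ induces, by functoriality of the whole construction, a retraction of Lie superalgebras $\mathsf{ABG}(\Alt(W'))\twoheadrightarrow \mathsf{ABG}(\Alt(W))$. Applying the Chevalley--Eilenberg functor yields a split surjection of chain complexes of $\mathfrak{sl}_3$-modules, and hence $H_\bullet(\mathsf{ABG}(\Alt(W)),\k)$ embeds as an $\mathfrak{sl}_3$-equivariant direct summand of $H_\bullet(\mathsf{ABG}(\Alt(W')),\k)$. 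In particular, a nonvanishing trivial or adjoint component in some homological degree $k>1$ for the smaller algebra forces the same for the larger one. Since every superspace $\k^m\mid\k^n$ with $m>1$ contains $\k^2$ as a direct summand, and every such superspace with $n>0$ contains $\k^{0\mid 1}$ as a direct summand, the theorem is reduced to these two specific cases.

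For the first base case, $W=\k^2$, I would invoke Artin's theorem to identify $\Alt(\k^2)$ with the free associative algebra on two generators; this makes the $GL_2$-character of $\Alt(\k^2)$ (and, with a small additional computation using the defining presentation, that of $\calB(\Alt(\k^2))$) entirely explicit. On the other hand, the conjectural prediction for these characters is encoded in the recursion of \cite[Lemma~4.1]{shang2025allisonbenkartgaofunctorcyclicityfree}, implemented in Appendix~\ref{sec:appendix}. The plan is to run the recursion, compare it degree-by-degree with the true characters, and locate the lowest degree in which they disagree; Lemma~\ref{lm:residue}, proved below, then converts such a character-level discrepancy into a genuine nonvanishing trivial or adjoint component of $H_k(\mathsf{ABG}(\Alt(\k^2)),\k)$ for some $k>1$. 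As announced in the introduction, the discrepancy first surfaces in degree $17$. The treatment of $W=\k^{0\mid 1}$ is entirely parallel, using the explicit description of the free alternative superalgebra on one odd generator due to Shestakov and Zhukavets, with the failure now appearing in degree $10$.

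The decisive step is the character-to-homology translation provided by Lemma~\ref{lm:residue}: without it, a mismatch between the true and the conjectural characters only shows that the Euler characteristic of the relevant homology is nonzero, without pinning down the homological degree where an obstruction must live. Once that lemma is in place, the two base-case verifications become essentially numerical (though delicate, as they require running the recursion well into degrees $10$ and $17$ and comparing the outputs with the Shestakov--Zhukavets and Artin descriptions), and the monotonicity argument of the second paragraph is elementary bookkeeping.
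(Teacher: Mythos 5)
Your proposal is correct and takes essentially the same route as the paper: both reduce to the base cases $(2\mid 0)$ and $(0\mid 1)$ and then use Lemma~\ref{lm:residue} together with the explicit descriptions of $\Alt(x_1,x_2)$ (Artin) and $\Alt(\varnothing\mid y_1)$ (Shestakov--Zhukavets) to exhibit a numerical mismatch. Two small remarks. First, you perform the reduction via a functorial retraction coming from a split inclusion of superspaces, whereas the paper restricts to the $\mathbb{N}^I$-graded pieces of the $\mathbb{N}^{m+n}$-multidegree grading on $\mathsf{ABG}(\Alt(x_1,\ldots,x_m\mid y_1,\ldots,y_n))$; these give the same direct-summand decomposition of the homology, and your phrasing is arguably cleaner. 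Second, calling Lemma~\ref{lm:residue} a ``character-to-homology translation'' slightly overstates its role: the lemma only shows that Conjecture~\ref{conj:conj2} forces the residue constraint, and passing from the failure of that constraint back to a nonvanishing trivial or adjoint component of some $H_k$ with $k>1$ also requires Shang's implication Conjecture~\ref{conj:conj1} $\Rightarrow$ Conjecture~\ref{conj:conj2} (Theorem~4.1 of his paper), which the paper cites explicitly in the proofs of Propositions~\ref{prop:Alt20} and~\ref{prop:Alt01} and which your argument uses only implicitly.
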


\begin{proof}
The free alternative superalgebra $\Alt(x_1,\ldots,x_m\mid y_1,\ldots,y_n)$ has a grading by the monoid $\mathbb{N}^{m+n}$, and this grading induces a grading on the Lie superalgebra 
 \[
\mathrm{ABG}(\Alt(x_1,\ldots,x_m\mid y_1,\ldots,y_n)),     
 \]
as well as on its homology. Considering the graded components supported on $\mathbb{N}^I\subset \mathbb{N}^{m+n}$ for various subsets $I$ of the set of variables, we see that our assertion would follow if we prove it for $(m\mid n)=(2\mid 0)$ and for $(m\mid n)=(0\mid 1)$. 
The first of these results is proved in Proposition \ref{prop:Alt20} below, and the second in Proposition \ref{prop:Alt01} below.
\end{proof}

To explain the proof in the case $(m\mid n)=(2\mid 0)$, we first prove an analogue of the result of \cite[Sec.~1.11]{MR4235202} in our context.

\begin{lemma}\label{lm:residue}
Conjecture \ref{conj:conj2} implies that the dimensions 
 \[
a_n(p):=\dim\Alt(a_1,\ldots,a_p)_k     
 \]
form the unique sequence satisfying
\begin{multline*}
\mathop{\mathrm{Res}}_{(q_1,q_2)=(0,0)}(q_1+pzq_1^{-1}q_2^{-1})(1-q_1q_2^{-1})(1-q_1q_2^2)(1-q_1^2q_2)\\ \times \prod_{n\ge 1}\bigl[\left(1-z^n(q_1^2q_2+q_1^{-2}q_2^{-1})+z^{2n}\right)\left(1-z^n(q_1q_2^2+q_1^{-1}q_2^{-2})+z^{2n}\right)\bigr.\\ \bigl.\times\left(1-z^n(q_1q_2^{-1}+q_1^{-1}q_2)+z^{2n}\right)\bigr]^{a_n(p)}
\,dq_1dq_2=0.     
\end{multline*}
Here $\mathop{\mathrm{Res}}\limits_{(q_1,q_2)=(0,0)}$, the residue at $(0,0)$, means the coefficient of $q_1^{-1}q_2^{-1}$.
\end{lemma}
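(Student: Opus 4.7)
The plan is to imitate \cite[Sec.~1.11]{MR4235202}, replacing $\mathfrak{sl}_2$ by $\mathfrak{sl}_3$. Assuming Conjecture \ref{conj:conj2} and specializing $V$ to a $p$-dimensional space with trivial torus action while letting $z$ record total degree, the graded dimensions of $\Alt(V)$ and $\calB(\Alt(V))$ become $\sum_n a_n(p)z^n$ and $\sum_n b_n(p)z^n$. The Allison--Benkart--Gao Lie algebra splits as an $\mathfrak{sl}_3$-module as $\mathfrak{sl}_3\otimes\Alt(V)\oplus\calB(\Alt(V))$, and since the adjoint of $\mathfrak{sl}_3$ decomposes into six root weights (paired as $(\alpha,-\alpha)$) and two Cartan zero weights, the $\lambda$-operation, which computes the Euler characteristic of the Chevalley--Eilenberg complex, factors as
\[
\lambda\bigl([\mathsf{ABG}(\Alt(V))]\bigr)=G(z)\cdot F(z,q_1,q_2),
\]
where $G(z)=\prod_n(1-z^n)^{2a_n(p)+b_n(p)}$ is independent of $q_1,q_2$, and $F(z,q_1,q_2)$ is precisely the infinite product in the statement of the lemma---the three positive roots of $\mathfrak{sl}_3$ contributing characters $q_1q_2^{-1}$, $q_1q_2^2$, $q_1^2q_2$.

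The core technical step is an $\mathfrak{sl}_3$-analogue of Weyl orthogonality. With $\Delta^+=(1-q_1q_2^{-1})(1-q_1q_2^2)(1-q_1^2q_2)$, the identity $\Delta^+=-e^\rho A_\rho$ (where $e^\rho=q_1^2q_2$ and $A_\rho=\sum_{w\in W}(-1)^we^{w\rho}$), together with the Weyl character formula $\chi_\mu\cdot A_\rho=\sum_w(-1)^we^{w(\mu+\rho)}$, reduces the extraction of $\mathfrak{sl}_3$-isotypic multiplicities to reading off Laurent coefficients. Since distinct dominant weights of $\mathfrak{sl}_3$ have disjoint Weyl orbits, each Laurent monomial in $q_1,q_2$ lies in the orbit of at most one $\mu+\rho$; analyzing the orbits of $\rho$ and of $\alpha_1+\alpha_2+\rho$ yields
\[
[q_1^0q_2^0](\Delta^+F)=m_{\mathrm{triv}}(F)\quad\text{and}\quad[q_1^{-2}q_2^{-1}](\Delta^+F)=m_{\mathrm{adj}}(F),
\]
with the signs determined by the particular (transposition) Weyl element taking the dominant weight to the target. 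Packaging these two extractions with the test function $q_1+pz\,q_1^{-1}q_2^{-1}$ turns them into the single residue identity
\[
\mathop{\mathrm{Res}}_{(q_1,q_2)=(0,0)}(q_1+pz\,q_1^{-1}q_2^{-1})\Delta^+F\,dq_1\,dq_2=m_{\mathrm{adj}}(F)+pz\cdot m_{\mathrm{triv}}(F).
\]

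To conclude, under Conjecture \ref{conj:conj2} the defining equations for $a(V),b(V)$ give $m_{\mathrm{triv}}(\lambda)=[\k]$ and $m_{\mathrm{adj}}(\lambda)=-[V]=-pz$; since $G(z)$ is $\mathfrak{sl}_3$-scalar, the multiplicities of $F$ are those of $\lambda$ divided by $G(z)$, so the linear relation $m_{\mathrm{adj}}(F)+pz\cdot m_{\mathrm{triv}}(F)=0$ still holds, proving the residue vanishing. Uniqueness then follows by expanding the residue as a power series in~$z$: since the $n$-th factor of $F$ raised to $a_n(p)$ equals $1-a_n(p)z^n\sum_{\alpha}e^\alpha+O(z^{2n})$, the coefficient of $a_n(p)$ in the $z^n$-coefficient of the residue equals $-m_{\mathrm{adj}}(\sum_\alpha e^\alpha)=-1$ (using $\sum_\alpha e^\alpha=\chi_{\mathrm{adj}}-2$), so $a_n(p)$ is determined recursively from $a_1(p),\ldots,a_{n-1}(p)$. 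The main obstacle is precisely the bookkeeping of the residue step---identifying which dominant $\mathfrak{sl}_3$-weights contribute to the two Laurent coefficients of $\Delta^+F$ that we extract, and tracking their signs; everything else reduces to standard generating-function manipulations.
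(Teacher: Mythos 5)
Your proposal is correct and follows essentially the same route as the paper: compute the Chevalley--Eilenberg Euler characteristic as a product, factor off the $q$-independent Cartan contribution $\prod_n(1-z^n)^{2a_n(p)+b_n(p)}$, extract trivial and adjoint multiplicities via the Weyl character formula, and combine the two extractions with the test function $q_1+pzq_1^{-1}q_2^{-1}$, with uniqueness following from a degree-by-degree recursion. Your version makes the Weyl-denominator bookkeeping (including the identity $\Delta^+=-e^\rho A_\rho$) and the recursion coefficient $-m_{\mathrm{adj}}(\sum_\alpha e^\alpha)=-1$ more explicit than the paper, but the argument is the same.
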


\begin{proof}
Let us denote $b_n(p):=\dim\calB(\Alt(a_1,\ldots,a_p))_k$. Since the character of the adjoint representation of $\mathfrak{sl}_3$ is equal to
 \[
q_1^2q_2+q_1q_2^{-1}+q_1^{-1}q_2+q_1q_2^2+q_1^{-2}q_2^{-1}+q_1^{-1}q_2^{-2}+2,     
 \]
the usual character formulas for exterior powers imply that the degree-by-degree generating function of $\mathfrak{sl}_3$-characters of 
 \[
\Lambda(\Alt(a_1,\ldots,a_p)\otimes\mathfrak{sl}_3+\calB(\Alt(a_1,\ldots,a_p)))    
 \]
is equal to the expression
\begin{multline*}
\prod_{n\ge 1}\left[\left(1-z^n(q_1^2q_2+q_1^{-2}q_2^{-1})+z^{2n}\right)\left(1-z^n(q_1q_2^2+q_1^{-1}q_2^{-2})+z^{2n}\right)\right.\\ \left.\times\left(1-z^n(q_1q_2^{-1}+q_1^{-1}q_2)+z^{2n}\right)\right]^{a_n(p)}\prod_{n\ge 1}(1-z^n)^{2a_n(p)+b_n(p)},
\end{multline*}
which we shall denote $\Phi(q_1,q_2,z)$.
We may extract the trivial and the adjoint component using the Weyl character formula \cite{MR1153249}, so that Conjecture~\ref{conj:conj2} implies
\begin{gather*}
\mathop{\mathrm{Res}}_{(q_1,q_2)=(0,0)}q_1^{-1}q_2^{-1}(1-q_1q_2^{-1})(1-q_1^2q_2)(1-q_1q_2^2)\Phi(q_1,q_2,z)=1,\\
\mathop{\mathrm{Res}}_{(q_1,q_2)=(0,0)}q_1(1-q_1q_2^{-1})(1-q_1^2q_2)(1-q_1q_2^2)\Phi(q_1,q_2,z)=-pz.
\end{gather*}
Combining the two with coefficients $pz$ and $1$ and using the $\mathbb{Z}[[z]]$-linearity, we get
 \[
\mathop{\mathrm{Res}}_{(q_1,q_2)=(0,0)}(q_1+pzq_1^{-1}q_2^{-1})(1-q_1q_2^{-1})(1-q_1^2q_2)(1-q_1q_2^2)\Phi(q_1,q_2,z) =0,    
 \]
so it remains to drop the factor $\prod_{n\ge 1}(1-z^n)^{2a_n(p)+b_n(p)}$ using the $\mathbb{Z}[[z]]$-linearity once again, to obtain the desired statement. Uniqueness of the sequence $\{a_n(p)\}$ follows from the fact that the coefficient of $z^n$ in
\begin{multline*}
(q_1+pzq_1^{-1}q_2^{-1})(1-q_1q_2^{-1})(1-q_1q_2^2)(1-q_1^2q_2)\\ \times \prod_{n\ge 1}\left((1-z^n(q_1^2q_2+q_1^{-2}q_2^{-1})+z^{2n})(1-z^n(q_1q_2^2+q_1^{-1}q_2^{-2})+z^{2n})\right.\\ \left.\times(1-z^n(q_1q_2^{-1}+q_1^{-1}q_2)+z^{2n})\right)^{a_n(p)}    
\end{multline*}
can be easily seen to contain $a_n(p)q_1^{-1}q_2^{-1}$ with a nonzero constant coefficient, and so the residue condition gives a recurrence relation from which one may determine all terms of the sequence.

\end{proof}

\begin{proposition}\label{prop:Alt20} 
Conjecture \ref{conj:conj1} fails for the algebra $\Alt(x_1,x_2)$.
\end{proposition}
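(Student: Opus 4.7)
The plan is to compare the dimensions of the homogeneous components of $\Alt(x_1,x_2)$ predicted by Conjecture \ref{conj:conj2} with the actual dimensions, and to exhibit a degree where they disagree. By a classical theorem of Artin \cite{Zorn1931}, any two elements of an alternative algebra generate an associative subalgebra, so $\Alt(x_1,x_2)$ coincides with the free associative algebra on two generators; in particular the true dimensions are $\dim\Alt(x_1,x_2)_n = 2^n$. Since Conjecture \ref{conj:conj1} implies Conjecture \ref{conj:conj2} via \cite[Th.~4.1]{shang2025allisonbenkartgaofunctorcyclicityfree}, it suffices to falsify Conjecture \ref{conj:conj2} in this case.

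Next I would invoke Lemma \ref{lm:residue} with $p=2$: extracting the coefficient of $z^n$ from the residue identity yields an equation which, as noted in the uniqueness argument at the end of the proof of that lemma, is linear in $a_n(2)$ with a nonzero leading coefficient. This gives an explicit recurrence determining $a_n(2)$ from $a_1(2),\ldots,a_{n-1}(2)$, and together with the obvious initial condition $a_1(2)=2$ it fixes a unique candidate sequence that the conjecture predicts the true dimensions must match. Since the intermediate expressions are Laurent polynomials in $q_1,q_2$ of rapidly growing size, I would implement the recurrence in \texttt{SageMath}, reusing or adapting the code from Appendix \ref{sec:appendix}.

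The remaining step is a finite verification: run the recurrence up to $n=17$ and compare its output with the true dimensions $2^n$. The expectation is that the conjectural and actual sequences coincide for $n\le 16$, but diverge at $n=17$; producing such a disagreement contradicts Conjecture \ref{conj:conj2} and hence disproves Conjecture \ref{conj:conj1} for $\Alt(x_1,x_2)$.

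Conceptually this is an almost direct analogue of the strategy of \cite[Sec.~1.11]{MR4235202} in the Jordan setting, so the main obstacle I expect is not theoretical but computational: one has to extract the $q_1^{-1}q_2^{-1}$-coefficient of a product of many Laurent polynomials of increasing complexity. A naive implementation may be prohibitively slow at $n=17$, and to make the argument checkable one should reorganize the recurrence so that only the relevant low-order terms in $q_1,q_2$ are carried through each step. Once this is done, the counterexample is a finite, machine-verified fact, and it is mildly surprising only because degree $17$ is quite far out, explaining why earlier experimental evidence was compatible with the conjecture.
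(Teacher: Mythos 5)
Your proposal is correct and follows essentially the same path as the paper: invoke Artin's theorem to get the true dimensions $2^n$, reduce to falsifying Conjecture~\ref{conj:conj2} via \cite[Th.~4.1]{shang2025allisonbenkartgaofunctorcyclicityfree}, apply Lemma~\ref{lm:residue} with $p=2$, and settle the matter with a finite \texttt{SageMath} computation revealing a mismatch at degree~$17$. The only (cosmetic) difference is that you frame the computation as running the recurrence to generate the conjectural sequence and then comparing with $2^n$, whereas the paper substitutes $a_n(2)=2^n$ directly into the residue identity and observes that the residue is $0$ for $k\le 16$ but equals $2$ at $k=17$; the two are trivially equivalent, and the paper's variant simply avoids solving the recurrence explicitly.
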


\begin{proof}
Recall that a theorem of Artin~\cite{Zorn1931} implies that $\Alt(x_1,x_2)$ coincides with free associative algebra on two generators. Thus, $a_n(2)=2^n$ for all $n$. We computed in \texttt{SageMath} \cite{sagemath} the coefficients of $z^k$ in 
\begin{multline}\label{eq:residue}
(q_1+2zq_1^{-1}q_2^{-1})(1-q_1q_2^{-1})(1-q_1q_2^2)(1-q_1^2q_2)\\ \times \prod_{n=1}^{17}\left((1-z^n(q_1^2q_2+q_1^{-2}q_2^{-1})+z^{2n})(1-z^n(q_1q_2^2+q_1^{-1}q_2^{-2})+z^{2n})\right.\\ \left.\times(1-z^n(q_1q_2^{-1}+q_1^{-1}q_2)+z^{2n})\right)^{2^n}
\end{multline}
for $k\le 17$; if one does calculations in $\mathbb{Q}[q_1,q_2,q_1^{-1},q_2^{-1}][z]/(z^{18})$, these computations can be done quite efficiently (and can even be verified by an extremely dedicated human being). It turns out that the coefficients of $z^k$ in \eqref{eq:residue} have the residue $0$ for $k\le 16$, but the coefficient for $k=17$ has the residue equal to $2$. This implies that the prediction for $\dim\Alt(x_1,x_2)_k$ is correct for $k\le 16$, but fails for $k=17$. Thus, Conjecture \ref{conj:conj2} does not hold, and hence Conjecture \ref{conj:conj1} does not hold either.   
\end{proof}

Let us now explain the proof in the case $(m\mid n)=(0\mid 1)$.


\begin{proposition}\label{prop:Alt01}
Conjecture \ref{conj:conj1} fails for the superalgebra $\Alt(\varnothing \mid y_1)$.
\end{proposition}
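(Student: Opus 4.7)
The plan is to adapt the proof of Proposition \ref{prop:Alt20} by using a super analogue of Lemma \ref{lm:residue}. The free alternative superalgebra $\Alt(\emptyset|y_1)$ is $\mathbb{N}$-graded by the degree in $y_1$, and its homogeneous component in degree $n$ has super-parity $n\bmod 2$ and dimension $\alpha_n:=\dim\Hom_{S_n}(\mathrm{sgn},\Alt(n))$, the multiplicity of the sign representation of $S_n$ in $\Alt(n)$. By Proposition \ref{prop:super} and the Schur--Weyl argument used in Corollary \ref{cor:SchurWeyl}, Conjecture \ref{conj:conj1} for this superalgebra would imply a super version of Conjecture \ref{conj:conj2}, which in turn yields a residue identity of the same shape as in Lemma \ref{lm:residue}.

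The first task is to derive this super residue identity. The underlying super-cocommutative coalgebra of the Chevalley--Eilenberg complex of the Lie superalgebra $\mathsf{ABG}(\Alt(\emptyset|y_1))$ is symmetric on the odd part and exterior on the even part of its shifted underlying super vector space. Tracking the Koszul signs through the character computation of Lemma \ref{lm:residue}, one finds that for even $n$ the factor $[(1-z^n(q_1^2q_2+q_1^{-2}q_2^{-1})+z^{2n})(1-z^n(q_1q_2^2+q_1^{-1}q_2^{-2})+z^{2n})(1-z^n(q_1q_2^{-1}+q_1^{-1}q_2)+z^{2n})]^{\alpha_n}$ in the integrand stays the same, whereas for odd $n$ its exponent $\alpha_n$ is negated and the sign of each linear term $z^n$ is flipped; a corresponding sign change appears in the prefactor that extracts trivial and adjoint multiplicities via the Weyl character formula.

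Second, I would use the explicit presentation of $\Alt(\emptyset|y_1)$ due to Shestakov and Zhukavets \cite{MR2355693}, which is detailed enough to produce the sequence $\{\alpha_n\}$ to any required degree. Plugging these dimensions into the super residue identity and computing modulo $z^{11}$ in $\mathbb{Q}[q_1^{\pm1},q_2^{\pm1}][z]$ with \texttt{SageMath}, I expect the residues of the coefficients of $z^k$ to vanish for $k\le 9$ but to be nonzero for $k=10$. As in Proposition \ref{prop:Alt20}, this contradicts the super analogue of Conjecture \ref{conj:conj2} and hence refutes Conjecture \ref{conj:conj1} for this superalgebra. The main obstacle is the careful book-keeping of Koszul signs in the super version of Lemma \ref{lm:residue}; once this identity is correctly stated, its verification against the Shestakov--Zhukavets data is a finite computer-algebra check that should proceed exactly as in the two-generator case.
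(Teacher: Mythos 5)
Your strategy coincides with the paper's: use the explicit Shestakov--Zhukavets basis to get the dimensions $d_n$ of $\Alt(\varnothing\mid y_1)$, derive a super analogue of Lemma~\ref{lm:residue}, and check by computer that the residue condition first fails at degree $10$. The one place where your account is shaky is the Koszul-sign bookkeeping, which you yourself flag as the main obstacle. In the paper's formulation the factors $\bigl[(1-z^n(\ldots)+z^{2n})\cdots\bigr]$ are left unchanged and only the \emph{exponents} are replaced by superdimensions $(-1)^n d_n$, while the term $pz$ in the prefactor becomes $-z$ because the virtual class of one odd degree-$1$ generator is $-z$. This is equivalent, after the substitution $z\mapsto -z$, to the variant where you flip the sign of $z^n$ inside the factors for odd $n$ \emph{but keep the prefactor unchanged} (it remains $(q_1+zq_1^{-1}q_2^{-1})$). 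Your description proposes to flip the $z^n$ signs in the odd-$n$ factors \emph{and} also change a sign in the prefactor; if both of these changes are made simultaneously, the two $z\mapsto -z$ substitutions cancel and one is left with the residue condition for a single \emph{even} generator with the numbers $d_n$, which is not the condition implied by the superalgebra version of the conjecture. Exactly one of the two sign changes should be applied. With this corrected, the computation does indeed first detect a nonzero residue at $k=10$ (the residue is $-2$ in the paper's normalisation), confirming the conclusion.
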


\begin{proof}
According to the results of Shestakov and Zhukavets who described a basis of $\Alt(\varnothing \mid y_1)$, the dimension $d_n$ of the degree $n$ component of $\Alt(\varnothing \mid y_1)$ is described as follows \cite[Cor.~5.2]{MR2355693}:
 \[
d_1=1,\quad d_2=1,\quad d_3=2,\quad d_n=2(n-3)+\frac12(1+(-1)^{n(n+1)/2})  \text{ for } n>3.   
 \]
Thus, the first few numbers in the sequence $\{d_n\}_{n\ge 1}$ are
 \[
1,1,2,3,4,6,9,11,12,14,\ldots
 \]
Completely analogously to Lemma \ref{lm:residue} (but using the ``superdimensions'' $\mathrm{sdim}(V_0\mid V_1)=\dim(V_0)-\dim(V_1)$), one may prove that if Conjecture \ref{conj:conj2} were true, the dimensions $d_n$ would form the unique sequence satisfying
\begin{multline*}
\mathop{\mathrm{Res}}_{(q_1,q_2)=(0,0)}(q_1-zq_1^{-1}q_2^{-1})(1-q_1q_2^{-1})(1-q_1q_2^2)(1-q_1^2q_2)\\ \times \prod_{n\ge 1}\bigl[\left(1-z^n(q_1^2q_2+q_1^{-2}q_2^{-1})+z^{2n}\right)\left(1-z^n(q_1q_2^2+q_1^{-1}q_2^{-2})+z^{2n}\right)\bigr.\\ \bigl.\times\left(1-z^n(q_1q_2^{-1}+q_1^{-1}q_2)+z^{2n}\right)\bigr]^{(-1)^nd_n}
\,dq_1dq_2=0.     
\end{multline*}
We computed in \texttt{SageMath} \cite{sagemath} the coefficients of $z^k$ in 
\begin{multline}\label{eq:residue}
(q_1-zq_1^{-1}q_2^{-1})(1-q_1q_2^{-1})(1-q_1q_2^2)(1-q_1^2q_2)\\ \times \prod_{n=1}^{17}\left((1-z^n(q_1^2q_2+q_1^{-2}q_2^{-1})+z^{2n})(1-z^n(q_1q_2^2+q_1^{-1}q_2^{-2})+z^{2n})\right.\\ \left.\times(1-z^n(q_1q_2^{-1}+q_1^{-1}q_2)+z^{2n})\right)^{(-1)^nd_n}
\end{multline}
for $k\le 10$; if one does calculations in $\mathbb{Q}[q_1,q_2,q_1^{-1},q_2^{-1}][z]/(z^{11})$, these computations can be done efficiently (and can be verified by a dedicated human being). It turns out that the coefficients of $z^k$ in \eqref{eq:residue} have the residue $0$ for $k\le 9$, but the coefficient for $k=10$ has the residue equal to $-2$. This implies that the prediction for $\dim\Alt(\varnothing \mid y_1)_k$ is correct for $k\le 9$, but fails for $k=10$. This disproves Conjecture \ref{conj:conj2}, and, due to \cite[Th.~4.1]{shang2025allisonbenkartgaofunctorcyclicityfree}, Conjecture \ref{conj:conj1} as well.
\end{proof}

\subsection{The lowest degree where the conjecture fails}

Let us begin with recording the following positive partial result.

\begin{proposition}\label{prop:lowdeg}
Conjecture \ref{conj:conj2} is true for all alternative algebras and superalgebras in degrees not exceeding $6$.
\end{proposition}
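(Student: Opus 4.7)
The plan is to reduce the proposition, via Schur--Weyl duality, to a finite verification in each arity up to~$6$, and then to carry that verification out by direct operadic computation. First, in total polynomial degree $n$, the class $[\Alt(V)]$ in the Grothendieck ring of $GL(V)$ is, by Schur--Weyl duality, determined by the $S_n$-module $\Alt(n)$; the superalgebra case of Conjecture~\ref{conj:conj2} reduces, via the super version of Schur--Weyl duality, to the same $S_n$-module structure (this is exactly the observation underlying Proposition~\ref{prop:super} and Corollary~\ref{cor:SchurWeyl}). Similarly, $[\calB(\Alt(V))]$ in degree $n$ is determined by the $S_n$-module $\calB(\Alt)(n)$, where $\calB(\Alt)$ is the right $\Alt$-module introduced in Section~\ref{sec:operads}. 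Thus the proposition is equivalent to matching, for each $n\le 6$, the $S_n$-character predicted by Conjecture~\ref{conj:conj2} with the actual Frobenius character of $\Alt(n)$ and of $\calB(\Alt)(n)$.

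The predicted $S_n$-characters are produced by a mild refinement of the \texttt{SageMath} code of Appendix~\ref{sec:appendix} in which one tracks Schur-function coefficients rather than mere dimensions, or equivalently by extracting them from a multivariate analogue of the residue formula of Lemma~\ref{lm:residue}. The actual $S_n$-module $\Alt(n)$ is obtained from the operadic presentation of $\Alt$ recalled in Section~\ref{sec:operads}: one starts from the free symmetric operad on a single binary generator, imposes the two multilinear alternative identities in arity~$3$, and computes the arity-by-arity quotient up to arity~$6$, for instance via a Gröbner basis in the operadic sense. The $S_n$-module $\calB(\Alt)(n)$ is then the quotient of $\Lambda^2(\Alt)(n)$ (with the exterior power taken for the Cauchy product, as in Section~\ref{sec:operads}) by the subrepresentation spanned by the cyclic relations $ab\wedge c+bc\wedge a+ca\wedge b$, a routine linear-algebra task once $\Alt(n)$ is known. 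Comparing the two Frobenius characters finishes the proof.

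The main obstacle is the explicit computation of $\Alt(n)$ and $\calB(\Alt)(n)$ in arities $5$ and $6$, where the dimensions grow quickly enough that the calculation must be organised by $S_n$-isotypic components in order to stay tractable. Two useful sanity checks are available along the way: Artin's theorem, which identifies the part of $\Alt(n)$ indexed by partitions with at most two parts with $\Ass(n)$; and Iltyakov's description of the free alternative algebra on three generators, which pins down the part indexed by partitions with at most three parts. Agreement with the \texttt{SageMath} prediction on all $S_n$-isotypic components of $\Alt(n)$ and $\calB(\Alt)(n)$ for $n\le 6$ then completes the verification.
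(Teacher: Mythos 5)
Your plan is essentially the paper's: reduce via Schur--Weyl duality to checking the $S_n$-module structure of $\Alt(n)$ (and $\calB(\Alt)(n)$) for $n\le 6$, and then verify by computer. The one substantive difference is the computational tool: you propose to compute $\Alt(n)$ by an arity-by-arity quotient of the free operad (or an operadic Gr\"obner basis), whereas the paper feeds the two multilinear alternative identities to the \texttt{albert} program, obtains the dimensions of multihomogeneous components of free alternative algebras on up to $6$ generators, and recovers the $S_n$-characters by inverting Schur--Weyl. Both routes are legitimate and produce the same data; \texttt{albert} has the advantage of being purpose-built for exactly this kind of nonassociative-identity computation, while the shuffle-operad Gr\"obner approach keeps the isotypic bookkeeping you mention more visible. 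Your sanity checks (Artin pins down the isotypic components indexed by partitions with at most two rows, Iltyakov those with at most three) are the same structural facts the paper uses elsewhere and are sensible cross-checks. One point genuinely in your favour: Conjecture~\ref{conj:conj2} has two assertions, and you explicitly propose to verify the $\calB(\Alt)(n)$ half as an independent computation (the quotient of $\Lambda^2(\Alt)(n)$ by the cyclic relations) rather than inferring it from the $\Alt(n)$ half; the paper's written proof only records the comparison of $\Alt(n)$ against the predicted $S_n$-modules, so your treatment is the more careful of the two on this point.
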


\begin{proof}
The prediction of Conjecture \ref{conj:conj2} about the $S_n$-module structure of $\Alt(n)$ for $n\le 6$ is as follows (if one denotes by $V_\lambda$, for $\lambda$ a partition of $n$, the irreducible $S_n$-module corresponding to $\lambda$):
\begin{gather*}
\Alt(1)\cong V_1,\\
\Alt(2)\cong V_{1^2}\oplus V_2,\\
\Alt(3)\cong V_{1^3}^2\oplus V_{2,1}^2\oplus V_3,\\
\Alt(4)\cong V_{1^4}^3\oplus V_{2,1^2}^5\oplus V_{2^2}^2\oplus V_{3,1}^3\oplus V_4,\\ 
\Alt(5)\cong V_{1^5}^4\oplus V_{2,1^3}^{10}\oplus V_{2^2,1}^7\oplus V_{3,1^2}^9\oplus V_{3,2}^5\oplus V_{4,1}^4\oplus V_5,\\ 
\Alt(6)\cong V_{1^6}^6\oplus V_{2,1^4}^{16}\oplus V_{2^2,1^2}^{18}\oplus V_{2^3}^8\oplus V_{3,1^3}^{20}\oplus V_{3,2,1}^{20}\oplus V_{3^2}^5\oplus V_{4,1^2}^{14}\oplus V_{4,2}^9\oplus V_{5,1}^5\oplus V_6. 
\end{gather*}
Using the \texttt{albert} program \cite{10.1145/190347.190358}, one may determine dimensions of multihomogeneous components of low degrees for alternative algebras with at most $6$ generators, which gives the characters of the $GL(V)$-module $\Alt(V)_n$ for $n\le 6$, and hence, via the Schur--Weyl duality \cite{MR1153249}, the $S_n$-module structure of $\Alt(n)$ for such~$n$. We verified that those module structures are given by the predictions above. It remains to note that for any vector space $V$, we have
 \[
\Alt(V)_n=\Alt(n)\otimes_{\k S_n} V^{\otimes n},     
 \]
proving the claim.
\end{proof}

We shall now see that Proposition \ref{prop:lowdeg} is the best possible positive result one can hope for. Concretely, we shall show that in the case $(m\mid n)=(3\mid 0)$, the conjecture fails already in degree $7$. For that, we recall a result of Iltyakov \cite{MR781229} who described a basis of $\Alt(x_1,x_2,x_3)$. For brevity, we denote below $L_{f,g}(h)=R_gR_f-R_{f\cdot g}$.

\begin{theorem}[{\cite{MR781229}}]\label{th:Ilt}
Consider the following elements in $\Alt(x_1,x_2,x_3)$:
\begin{gather*}
\mathrm{W}_0:=\{L_{x_1,x_2}^{n_1}L_{x_2,x_3}^{n_2}L_{x_3,x_1}^{n_3}L_{x_1,x_1}^{n_4}L_{x_2,x_2}^{n_5}L_{x_3,x_3}^{n_6}L_{x_1,[x_2,x_3]}^{n_7}(x_1,x_2,x_3)\colon n_1,\ldots,n_7\ge 0\},\\
\mathrm{W}_1:=\{[w,x_1],[w,x_2],[w,x_3]\colon w\in W_0\},\\
\mathrm{W}_2:=\{(w,x_1,x_2),(w,x_2,x_3),(w,x_1,x_3)\colon w\in W_0\},\\
\mathrm{W}:=\calW_0\sqcup \calW_1\sqcup \calW_2,\\
\mathrm{W}':=\{w\cdot (x_1,x_2,x_3)\colon w\in \calW_0\},\\
\mathrm{B}:=\{R_{x_3}^{n_3}R_{x_2}^{n_2}R_{x_1}^{n_1}w\colon n_1,n_2,n_3\ge 0, w\in \calW\sqcup\calW'\}
\end{gather*}
Then we have a vector space isomorphism 
 \[
\Alt(x_1,x_2,x_3)\cong\Ass(x_1,x_2,x_3)\oplus\k\mathrm{B}.     
 \]
\end{theorem}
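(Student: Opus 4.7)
The plan is to prove the decomposition in two steps: first, identify the natural linear complement to $\Ass(x_1,x_2,x_3)$ inside $\Alt(x_1,x_2,x_3)$ as the ideal $I$ generated by the associator $(x_1,x_2,x_3)$; second, show that $\mathrm{B}$ is a basis of $I$. For the first step, observe that the multilinear alternative identities collapse to associativity modulo $I$, so the quotient $\Alt(x_1,x_2,x_3)/I$ is naturally the free associative algebra on three generators. Any PBW-type monomial basis of $\Ass(x_1,x_2,x_3)$ therefore lifts to $\Alt(x_1,x_2,x_3)$ and yields the claimed direct-sum splitting at the level of vector spaces; what remains is the explicit description of $I$.

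For the spanning part, I would set up a rewriting procedure based on the identities \eqref{eq:antisym}, \eqref{eq:cyclic}, the derivation property of $D_{a,b}$, and the Moufang and Kleinfeld identities. An arbitrary monomial in $I$ contains at least one associator; isolating a copy of $(x_1,x_2,x_3)$, the ambient product acts on this seed through compositions of $L_{f,g}$ and right multiplications $R_{x_i}$ together with lower-degree corrections. Using $[D,D_{a,b}]=D_{D(a),b}+D_{a,D(b)}$ and its $L$-theoretic analogue, one pushes the $L_{f,g}$ past each other so that only the seven distinguished operators listed in $\calW_0$ remain, the correction terms being absorbed into the sets $\calW_1$, $\calW_2$, $\calW'$; the outer right multiplications are then ordered lexicographically as $R_{x_3}^{n_3}R_{x_2}^{n_2}R_{x_1}^{n_1}$, yielding the required form of $\mathrm{B}$.

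For linear independence, the cleanest route is a Shirshov-style faithful representation: map $\Alt(x_1,x_2,x_3)$ into $C\otimes\k[t_{i,j}]$, where $C$ is the split Cayley algebra, by sending each $x_i$ to a generic octonion-valued polynomial, and check that distinct exponent tuples in the definition of $\mathrm{B}$ produce polynomially independent images by degree considerations; a more computational alternative is to match the multigraded Hilbert series of $\Alt(x_1,x_2,x_3)$ in low degree (e.g.\ using the \texttt{albert} program \cite{10.1145/190347.190358}) against the enumeration of $\mathrm{B}$. The main obstacle is the rewriting argument of the previous paragraph: verifying that the seven $L_{f,g}$ operators of $\calW_0$ really do commute modulo terms lying in the span of $\calW_1\sqcup\calW_2\sqcup\calW'$ is a delicate manipulation of Kleinfeld-type identities, and this is precisely the technical heart of Iltyakov's argument in \cite{MR781229}.
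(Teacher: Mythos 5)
The paper gives no proof of this statement: it is Iltyakov's theorem, cited to \cite{MR781229} and used as a black box in the proof of Proposition~\ref{prop:Alt30}, so there is no ``paper's proof'' to compare yours against.

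As a standalone attempt, your outline correctly identifies the shape of the argument --- split $\Alt(x_1,x_2,x_3)$ as the free associative algebra plus the ideal $I$ generated by the associator, show $\mathrm{B}$ spans $I$ by a rewriting procedure, then establish linear independence --- but it leaves the substantive work undone, and one of your two proposed routes to linear independence is unlikely to work. For spanning, you acknowledge yourself that the reduction of an arbitrary element of $I$ to the normal form, in particular the commutation of the seven operators $L_{f,g}$ up to terms in $\k(\calW_1\sqcup\calW_2\sqcup\calW')$, is ``precisely the technical heart of Iltyakov's argument''; so this is a sketch, not a proof. For linear independence, the Shirshov-style embedding of $\Alt(x_1,x_2,x_3)$ into $C\otimes\k[t_{i,j}]$ is problematic: free alternative algebras on a small number of generators are not known (and, in general, are not expected) to embed faithfully into scalar extensions of the split Cayley algebra, precisely because they contain elements in the kernel of every such evaluation --- the introduction of this very paper recalls that free alternative algebras on sufficiently many generators contain central and nilpotent elements \cite{MR539586}, and for three generators the relevant $T$-ideal of Cayley identities need not vanish. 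Your fallback --- matching multigraded Hilbert series ``in low degree'' with \texttt{albert} --- verifies the claim only up to some finite degree and therefore cannot establish the theorem, which is an assertion about all degrees. A complete proof would need either a closed-form Hilbert series argument in all degrees (which is what Iltyakov actually does, by a careful combinatorial count matching $\dim I_n$ against $|\mathrm{B}_n|$ once spanning is known) or a genuinely faithful linearization; as written, neither is supplied.
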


This allows us to disprove Conjecture \ref{conj:conj1} in the case $(m\mid n)=(3\mid 0)$.

\begin{proposition}\label{prop:Alt30}
Conjecture \ref{conj:conj1} fails for the algebra $\Alt(x_1,x_2,x_3)$.
\end{proposition}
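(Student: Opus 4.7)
The plan is to apply the strategy used in the proofs of Propositions \ref{prop:Alt20} and \ref{prop:Alt01}. First I would use Iltyakov's explicit basis (Theorem \ref{th:Ilt}) to compute the dimensions $a_n(3):=\dim\Alt(x_1,x_2,x_3)_n$ for $n=1,\ldots,7$ via the decomposition
\[
a_n(3)=3^n+|\mathrm{B}_n|.
\]
Since every element of $\mathrm{B}$ has the canonical form $R_{x_3}^{n_3}R_{x_2}^{n_2}R_{x_1}^{n_1}w$ with $w\in \calW\sqcup \calW'$, the contribution $|\mathrm{B}_n|$ can be expressed as
\[
|\mathrm{B}_n|=\sum_{k=0}^{n-3}\binom{k+2}{2}\,|\calW\sqcup\calW'|_{n-k},
\]
and the enumeration of $\calW_0$ in degree $m$ reduces to counting non-negative integer solutions of $2(n_1+\cdots+n_6)+3n_7=m-3$, while $|\calW_1|_m=3|\calW_0|_{m-1}$, $|\calW_2|_m=3|\calW_0|_{m-2}$, and $|\calW'|_m=|\calW_0|_{m-3}$. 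These are elementary stars-and-bars counts, and they yield the full sequence $\{a_n(3)\}_{n=1}^{7}$; a useful cross-check is that the resulting dimensions must match the predictions of Proposition \ref{prop:lowdeg} for $n\le 6$.

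Next I would substitute these dimensions into the residue formula of Lemma \ref{lm:residue} with $p=3$, truncated modulo $z^8$, and compute in \texttt{SageMath} the residue at $(q_1,q_2)=(0,0)$ of the coefficient of $z^k$. If the computation is carried out correctly, the residue will vanish for $k\le 6$ (consistent with Proposition \ref{prop:lowdeg}) and will be nonzero at $k=7$. By Lemma \ref{lm:residue} this contradicts Conjecture \ref{conj:conj2}, and hence by \cite[Th.~4.1]{shang2025allisonbenkartgaofunctorcyclicityfree} Conjecture \ref{conj:conj1} as well.

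The main obstacle is the combinatorial bookkeeping for Iltyakov's basis: one must carefully handle the degree shifts $+1$, $+2$, $+3$ built into the definitions of $\calW_1$, $\calW_2$, and $\calW'$, and the discrete multiplicities of $3$ attached to $\calW_1$ and $\calW_2$ (from the three choices of commutator or associator index). Once the sequence $\{a_n(3)\}_{n=1}^{7}$ is computed and cross-checked against Proposition \ref{prop:lowdeg} up to degree $6$, the final \texttt{SageMath} verification is a direct adaptation of the code used for Propositions \ref{prop:Alt20} and \ref{prop:Alt01}.
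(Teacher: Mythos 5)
Your proposal is correct and follows essentially the same path as the paper: extract the low-degree dimensions $a_n(3)$ from Iltyakov's basis (Theorem \ref{th:Ilt}), then feed them into the residue criterion of Lemma \ref{lm:residue} with $p=3$ and check by computer that the residue is nonzero at $k=7$. The only cosmetic difference is that you tabulate $|\calW_0|_m$ via stars-and-bars and the recursion $|\calW_1|_m=3|\calW_0|_{m-1}$, $|\calW_2|_m=3|\calW_0|_{m-2}$, $|\calW'|_m=|\calW_0|_{m-3}$, whereas the paper packages the same count into the closed-form generating function $\tfrac{1}{1-3t}+\tfrac{t^3}{(1-t)^6(1-t^2)^3(1-t^3)}$ (giving $a_7(3)=2388$ and a residue of $-15$ at $k=7$); these are equivalent computations.
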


\begin{proof}
The vector space isomorphism  
 \[
\Alt(x_1,x_2,x_3)\cong\Ass(x_1,x_2,x_3)\oplus\k\mathrm{B}     
 \]
of Theorem \ref{th:Ilt} can be used to prove the following formula for the generating function of dimensions of homogeneous components of $\Alt(x_1,x_2,x_3)$:
\begin{equation}\label{eq:IltChar}
\frac{1}{1-3t}+\frac{t^3}{(1-t)^6(1-t^2)^3(1-t^3)}.     
\end{equation}
Indeed, we first note that the vector space $\k\mathrm{W}_0$ looks like the module with one generator of weight $3$ over the algebra of polynomials in six generators of weight $2$ and one generator of weight $3$, so the corresponding generating function is
 \[
\frac{t^3}{(1-t^2)^6(1-t^3)}.   
 \]
The vector space $\k\mathrm{W}_1$ is the direct sum of three copies of $\k\mathrm{W}_0$ with weight shifted by one, so the corresponding generating function is
 \[
\frac{3t^4}{(1-t^2)^6(1-t^3)}.   
 \]
The vector space $\k\mathrm{W}_2$ is the direct sum of three copies of $\k\mathrm{W}_0$ with weight shifted by two, so the corresponding generating function is
 \[
\frac{3t^5}{(1-t^2)^6(1-t^3)}.   
 \]
The vector space $\k\calW'$ is one copy of $\k\calW_0$ with weight shifted by three, so the corresponding generating function is
 \[
\frac{t^6}{(1-t^2)^6(1-t^3)}.   
 \] 
Therefore, the vector space $\k\mathrm{W}\oplus\k\mathrm{W}'=\k\mathrm{W}_0\oplus\k\mathrm{W}_1\oplus\k\mathrm{W}_2\oplus\k\mathrm{W}'$ has the generating function of dimensions of homogeneous components
 \[
\frac{t^3(1+3t+3t^2+t^3)}{(1-t^2)^6(1-t^3)}=\frac{t^3(1+t)^3}{(1-t^2)^6(1-t^3)}=\frac{t^3}{(1-t)^3(1-t^2)^3(1-t^3)}.   
 \] 
Finally, the vector space $\k\mathrm{B}$ looks like the module generated by the vector space $\k\mathrm{W}\oplus\k\mathrm{W}'$ over the algebra of polynomials in three variables of weight $1$, so the corresponding generating function is
 \[
\frac{t^3}{(1-t)^6(1-t^2)^3(1-t^3)} .    
 \]
To complete the proof of Formula \eqref{eq:IltChar}, it remains to recall that the generating function of dimensions of homogeneous components of $\Ass(x_1,x_2,x_3)$ is $\frac{1}{1-3t}$. 

Expanding Formula \eqref{eq:IltChar} to order $7$, we obtain
 \[
1 + 3t + 9t^2 + 28t^3 + 87t^4 + 267t^5 + 804t^6 + 2388t^7 + O(t^8).     
 \]
We may once again use Lemma \ref{lm:residue}. Like in the proof of Proposition \ref{prop:Alt20}, we computed in \texttt{SageMath} \cite{sagemath} the coefficients of $z^k$ in 
\begin{multline}\label{eq:residue}
(q_1+2zq_1^{-1}q_2^{-1})(1-q_1q_2^{-1})(1-q_1q_2^2)(1-q_1^2q_2)\\ \times \prod_{n=1}^{7}\left((1-z^n(q_1^2q_2+q_1^{-2}q_2^{-1})+z^{2n})(1-z^n(q_1q_2^2+q_1^{-1}q_2^{-2})+z^{2n})\right.\\ \left.\times(1-z^n(q_1q_2^{-1}+q_1^{-1}q_2)+z^{2n})\right)^{a_n(3)}
\end{multline}
for $k\le 7$; if one does calculations in $\mathbb{Q}[q_1,q_2,q_1^{-1},q_2^{-1}][z]/(z^{8})$, these computations are not very complex and are certainly verifiable by a sufficiently thorough human being. It turns out that those coefficients have the residue $0$ for $k\le 6$ as expected from Proposition \ref{prop:lowdeg}, but the coefficient for $k=7$ has the residue equal to $-15$. This implies that the prediction for $\dim\Alt(x_1,x_2,x_3)_k$ is correct for $k\le 6$, but fails for $k=7$. This disproves Conjecture \ref{conj:conj2}, and, due to \cite[Th.~4.1]{shang2025allisonbenkartgaofunctorcyclicityfree}, Conjecture \ref{conj:conj1} as well. 
\end{proof}

\begin{remark}
In fact, using the \texttt{albert} program \cite{10.1145/190347.190358}, we can get more precise information in the case of $\Alt(x_1,x_2,x_3)$. Using that program, we find that the dimension of the multihomogeneous component of multidegree $(3,3,1)$ in $\Alt(x_1,x_2,x_3)$ is equal to $154$, as opposed to the prediction $152$ of the conjecture of \cite{shang2025allisonbenkartgaofunctorcyclicityfree}, and that the dimension of the multihomogeneous component of multidegree $(3,2,2)$ in $\Alt(x_1,x_2,x_3)$ is equal to $236$, as opposed to the prediction $233$ of the conjecture of \cite{shang2025allisonbenkartgaofunctorcyclicityfree}. Altogether these mismatches, multiplied by the cardinalities of $S_3$-orbits of the respective multidegrees, add up to $15$, the total dimension mismatch.
\end{remark}

\subsection{The conjecture and the Schur positivity property}

One may view the calculation of Proposition~\ref{prop:Alt01} as that of the multiplicity of the sign $S_{10}$-module in the component $\Alt(10)$ of the alternative operad. A yet another problem emerges if one computes multiplicities of \emph{all} irreducible $S_{10}$-modules in that component.

\begin{proposition}
Conjecture \ref{conj:conj2} predicts, for the $S_{10}$-action on the component $\Alt(10)$ of the alternative operad, a virtual module that is not effective (that is, includes some irreducible $S_{10}$-modules with negative coefficients).
\end{proposition}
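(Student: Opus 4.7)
The plan is to exploit the Schur--Weyl duality: if we denote by $c_\lambda$ the multiplicity of the Schur function $s_\lambda$ in the predicted character of $\Alt(V)_{10}$ (equivalently, in the homogeneous degree $10$ component of the class $a(V)$ of Conjecture \ref{conj:conj2}), then by Schur--Weyl duality $c_\lambda$ is precisely the conjectural multiplicity of the irreducible $S_{10}$-module $V_\lambda$ in $\Alt(10)$. Thus the proposition will follow from exhibiting a single partition $\lambda \vdash 10$ with $c_\lambda < 0$.

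To access these multiplicities concretely, I would run the recursive procedure of \cite[Lemma 4.1]{shang2025allisonbenkartgaofunctorcyclicityfree}, as implemented in the \texttt{SageMath} code of Appendix \ref{sec:appendix}, up through degree $10$. The output is a virtual class in the Grothendieck ring of $GL(V)$ which expands uniquely in the Schur basis with integer coefficients $\{c_\lambda\}_{\lambda \vdash 10}$. Scanning the $42$ partitions of $10$ and reading off the signs is then a finite and mechanical verification. I would then record explicitly one partition $\lambda \vdash 10$ with $c_\lambda$ strictly negative as the required witness to non-effectivity.

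The main obstacle is computational rather than conceptual: the iteration lives in a Laurent polynomial ring in the two equivariant variables for $\mathfrak{sl}_3$, truncated at total degree $10$, and it must be executed reliably enough to trust each Schur coefficient. Fortunately, Proposition \ref{prop:lowdeg} already provides a sanity check that the procedure reproduces the correct $S_n$-module structure for $n \le 6$, so any negative coefficient appearing at $n=10$ may be taken at face value. It is worth emphasizing that this obstruction is qualitatively sharper than those obtained earlier in the section: the previous counterexamples each contradicted only a one-dimensional numerical shadow of the conjectural character (a total dimension or a superdimension), whereas here the prediction fails to be an \emph{effective} virtual character at all, so no conceivable reinterpretation of trivial or adjoint isotypic components in higher homological degrees could rescue the prediction.
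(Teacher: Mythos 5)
Your proposal takes essentially the same approach as the paper: invoke Schur--Weyl duality to identify the Schur coefficients of the predicted class $a(V)$ in degree $10$ with the conjectural $S_{10}$-module multiplicities in $\Alt(10)$, run the recursive procedure of Appendix~\ref{sec:appendix}, and exhibit a negative coefficient. The paper carries out precisely this computation and finds eight Schur functions (from $s_{2^2,1^6}$ through $s_{3^2,1^4}$) with negative coefficients, any one of which serves as the witness you describe.
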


\begin{proof}
Computing the prediction of Conjecture \ref{conj:conj2} using the procedure implemented in Appendix \ref{sec:appendix}, we obtain the following answer in terms of the Schur symmetric functions (which correspond to characters of irreducible $S_n$-modules \cite{MR3443860}):
\begin{multline*}
12s_{1^{10}} + 16s_{2,1^8} + (-12)s_{2^2,1^6} + (-32)s_{2^3,1^4}+ (-27)s_{2^4,1^2} + (-11)s_{2^5}  + (-2)s_{3,1^7}\\ 
 + (-56)s_{3,2,1^5} + (-41)s_{3,2^2,1^3} + 16s_{3,2^3,1} + (-16)s_{3^2,1^4}+ 69s_{3^2,2,1^2} + 67s_{3^2,2^2} \\
  + 68s_{3^3,1} + 26s_{4,1^6} + 95s_{4,2,1^4} + 226s_{4,2^2,1^2} + 145s_{4,2^3} + 251s_{4,3,1^3} + 412s_{4,3,2,1} \\ 
+ 124s_{4,3^2} + 217s_{4^2,1^2}+ 179s_{4^2,2} + 109s_{5,1^5}+ 356s_{5,2,1^3} + 415s_{5,2^2,1}+ 472s_{5,3,1^2} \\
  + 361s_{5,3,2} + 268s_{5,4,1} + 42s_{5^2}  + 151s_{6,1^4} + 365s_{6,2,1^2} + 218s_{6,2^2}+ 307s_{6,3,1} \\
 + 90s_{6,4} + 110s_{7,1^3} + 172s_{7,2,1} + 75s_{7,3} + 44s_{8,1^2} + 35s_{8,2} + 9s_{9,1} + s_{10} .     
\end{multline*}
We note that some of the coefficients are negative: those of 
 \[
s_{2^2,1^6},\quad s_{2^3,1^4},\quad s_{2^4,1^2},\quad s_{2^5},\quad  s_{3,1^7},\quad s_{3,2,1^5},\quad s_{3,2^2,1^3},\quad s_{3^2,1^4}. 
 \]
As a consequence, we see that the predicted answer is a virtual character that is not effective. 
\end{proof}

\begin{remark}
The Schur function among the ones listed in the proof above that corresponds to a partition with the smallest possible number of rows is $s_{2^5}$. Therefore starting from $\Alt(a_1,a_2,a_3,a_4,a_5)$, there are certainly unexpected homology classes in the trivial and/or adjoint component that prevent the conjectural answer even from being Schur-positive (let alone correct).
\end{remark}

\section{Inner derivations of free alternative algebras}\label{sec:inner}

In \cite[Lemma~3.3]{shang2025allisonbenkartgaofunctorcyclicityfree}, it is established that if Conjecture \ref{conj:conj1} is true for a given vector space $V$, then 
 \[
\calB(\Alt(V))=\mathrm{Inner}(\Alt(V)).
 \] 

Let us prove that, even though Conjecture \ref{conj:conj1} fails for the free alternative algebra with two generators, the above conclusion still holds. In fact, we prove a more general statement, from which the statement about the free alternative algebra with two generators follows by virtue of the theorem of Artin.

\begin{proposition}
Let $V$ be a vector space, and consider the tensor algebra $T(V)$ as an alternative algebra. Then 
 \[
\calB(T(V))\cong \mathrm{Inner}(T(V)).     
 \]
\end{proposition}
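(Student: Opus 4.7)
The plan is to identify both sides canonically with $[T(V), T(V)]$. In any associative algebra, $L_a$ and $R_b$ commute, so $[L_a, R_b] = 0$; combined with $[L_a, L_b] = L_{[a,b]}$ and $[R_a, R_b] = -R_{[a,b]}$, this yields $D_{a,b} = \mathrm{ad}_{[a,b]}$ in $T(V)$. Thus $\mathrm{Inner}(T(V))$ is the image of $\mathrm{ad} : [T(V), T(V)] \to \mathrm{End}(T(V))$. The centre of $T(V)$ equals $\k$ for $\dim V \ge 2$ and equals $T(V)$ itself for $\dim V \le 1$ (in which case $[T(V), T(V)] = 0$ anyway), so $Z(T(V)) \cap [T(V), T(V)] = 0$ in all cases, and $\mathrm{ad}$ descends to an isomorphism $[T(V), T(V)] \cong \mathrm{Inner}(T(V))$.

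Next, the map $\calB(T(V)) \to [T(V), T(V)]$, $a \wedge b \mapsto [a,b]$, is well-defined (since $[ab, c] + [bc, a] + [ca, b] = 0$ by direct expansion) and surjective, so the proposition reduces to showing that it is injective, i.e., that the kernel of $[\cdot,\cdot] : \Lambda^2 T(V) \to T(V)$ is spanned by the cyclic defining relations of $\calB$. This is precisely the statement $HC_1(T(V)) = 0$: in the Connes cyclic complex, the degree-one term $T(V)^{\otimes 2}/(1 - t)$ identifies with $\Lambda^2 T(V)$ because $t(a \otimes b) = -b \otimes a$, and the image of the Hochschild differential $b : T(V)^{\otimes 3} \to T(V)^{\otimes 2}$ projects to $ab \wedge c - a \wedge bc + ca \wedge b$ in $\Lambda^2 T(V)$, which coincides with the span of the cyclic defining relations after rewriting $a \wedge bc = -bc \wedge a$.

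To prove $HC_1(T(V)) = 0$, I would use the short projective bimodule resolution $0 \to T(V) \otimes V \otimes T(V) \to T(V) \otimes T(V) \to T(V) \to 0$ to compute $HH_0(T(V)) = T(V)/[T(V), T(V)]$, $HH_1(T(V)) \cong \ker(V \otimes T(V) \to T(V),\ v \otimes a \mapsto [v, a])$, and $HH_n(T(V)) = 0$ for $n \ge 2$. Decomposing by internal grading, for $n \ge 1$ the defining map of $HH_1$ in degree $n$ identifies with $1 - c$ on $V^{\otimes n}$, where $c$ is the cyclic shift, giving $HH_0(T(V))_n \cong (V^{\otimes n})_{C_n}$ and $HH_1(T(V))_n \cong (V^{\otimes n})^{C_n}$. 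Connes' exact sequence then yields $HC_1(T(V)) = \mathrm{coker}(B : HC_0(T(V)) \to HH_1(T(V)))$, and a direct chain-level computation shows that on each graded piece $B$ coincides (up to a sign) with the norm map $1 + c + \cdots + c^{n-1}$ from cyclic coinvariants to cyclic invariants, which is an isomorphism in characteristic zero. The main technical step is this identification of $B$ with the norm map, but it is a routine computation; alternatively one could bypass it by a dimension count using that cyclic coinvariants and invariants have equal dimension in characteristic zero.
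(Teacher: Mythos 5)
Your overall strategy is the same as the paper's: show that in the associative case $D_{a,b}=\mathrm{ad}_{[a,b]}$, use $Z(T(V))\cap[T(V),T(V)]=0$ to identify $\mathrm{Inner}(T(V))$ with $[T(V),T(V)]$, and then observe that the remaining isomorphism $\calB(T(V))\cong[T(V),T(V)]$ is exactly the vanishing of $HC_1(T(V))$. The one place you diverge is that the paper simply cites a reference for the cyclic-homology fact, whereas you prove it from scratch using the short bimodule resolution of $T(V)$, the identification of $HH_0$ and $HH_1$ in each internal degree with cyclic coinvariants and invariants of $V^{\otimes n}$, and the SBI exact sequence; your chain-level identification of Connes' $B$ with the norm map $1+c+\cdots+c^{n-1}$ is correct and closes the argument cleanly.

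One caveat: the ``dimension count'' alternative you sketch at the end does not, as stated, suffice. Knowing only that cyclic invariants and coinvariants of $V^{\otimes n}$ have equal dimension gives $\dim HC_0(T(V))_n=\dim HH_1(T(V))_n$, but without information about $B$ itself this only yields $\dim HC_1(T(V))_n=\dim HC_2(T(V))_n$ (the first is $\mathrm{coker}\,B$, the second is $\ker B$ since $HH_2=0$), not that both vanish. To turn the numerical observation into a proof you would also need something like the $S$-periodicity $HC_{m}\cong HC_{m+2}$ for $m\ge 1$ combined with the fact that the reduced cyclic complex of $T(V)$ in internal degree $n$ is concentrated in homological degrees $<n$, forcing $HC_1(T(V))_n\cong HC_{2k+1}(T(V))_n=0$ for $2k+1\ge n$. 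Since your main route via the norm map already works, this is a minor point, but as written the fallback is not an independent proof.
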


\begin{proof}
Note that for an associative algebra $A$, we have $[L_a,R_b]=0$ for all $a,b$. Moreover, we have 
 \[
([L_a,L_b]+[R_a,R_b])(c)=abc-bac+cba-cab=[ab-ba,c]=[[a,b],c]. 
 \]
Moreover, since $[T(V),T(V)]\cap Z(T(V))=0$, we have 
 \[
\mathrm{Inner}(T(V))\cong [T(V),T(V)].     
 \]  
At the same time, the fact that the kernel of the canonical surjective map
 \[
\Lambda^2(T(V))\twoheadrightarrow [T(V),T(V)]
 \]
coincides with
 \[
(ab\wedge c+bc\wedge a+ca\wedge b\colon a,b,c\in T(V))    
 \] 
is essentially the computation of the cyclic homology $HC_1(T(V))$, and in any case is well known; see, for instance \cite[Lemma~1.2]{MR2350124}. Thus, we have
 \[
\Lambda^2(T(V))(ab\wedge c+bc\wedge a+ca\wedge b\colon a,b,c\in T(V))\cong [T(V),T(V)],
 \]
and the left hand side of this isomorphism is how $\calB(T(V))$ is defined, which completes the proof.
\end{proof}

This raises a natural question whether we always have 
 \[
\calB(\Alt(V))\cong\mathrm{Inner}(\Alt(V))     
 \]
even though Conjecture \ref{conj:conj1} is not there to help us.

\appendix 

\section{The \texorpdfstring{\texttt{SageMath}}{SageMath} code}\label{sec:appendix}

The following code computes the prediction of Conjecture \ref{conj:conj2} for the symmetric group actions on the components of the operad of alternative algebras. It literally follows the recursive procedure described in \cite[Lemma 4.1]{shang2025allisonbenkartgaofunctorcyclicityfree}. Note that in the language of symmetric functions \cite{MR3443860}, the operation 
 \[
\lambda(x)=\sum_{k\ge 0}(-1)^k[\Lambda^k(U)]     
 \]
used to define the elements $a(V)$ and $b(V)$ corresponds to the plethysm with the element $\sum_{k\ge 0}(-1)^ke_k$. To extract the trivial and adjoint multiplicities we use the Weyl character formula for $\mathfrak{sl}_3$, see, e.g. \cite{MR1153249}.  

\bigskip 

\begin{verbatim}
R.<x,y> = LaurentPolynomialRing(QQ,2)
Sym = SymmetricFunctions(R)
s = Sym.s()
e = Sym.e()

def computealtlazy(dd):
    LazyPowerSeriesRing.options.display_length(dd)
    L = LazySymmetricFunctions(e)
    la = L(lambda n: (-1)^n*e[n])
    a = [0 for t in range (1,dd+2)]
    b = [0 for t in range (1,dd+2)]
    echar = 1
    a[1] = s[1]
    ad = x^2*y+x/y+y/x+x*y^2+1/(x^2*y)+1/(x*y^2)+2
    print(a[1],"\n")
    for i in range (1,dd) :
        echar=echar*la(a[i]*ad+b[i])
        new = (1-x*y^-1)*(1-x*y^2)*(1-x^2*y)*echar
        newp = new.coefficient(i+1)
        a[i+1] = newp.map_coefficients(lambda cf:cf[-2,-1])
        b[i+1] = newp.map_coefficients(lambda cf:cf[0,0])
        print(a[i+1],"\n")
\end{verbatim}

\printbibliography

\end{document}